\documentclass{amsart}
\usepackage[utf8]{inputenc}
\usepackage{xcolor,fullpage,hyperref}
\usepackage{tikz,amsmath,amssymb,color}
\usetikzlibrary{calc}
\usepackage{changepage}
\usepackage{tabularray}
\usepackage[longtable]{multirow}
\usepackage{longtable}
\usepackage{pgfplots}
\pgfplotsset{compat=1.15}
\usepackage{mathrsfs}
\usetikzlibrary{arrows}
\usepackage{ wasysym }

\usepackage{pgf}
\usepackage[backrefs]{amsrefs}
\usepackage{xcolor}
\usepackage{cleveref}
\usepackage{amssymb}

\newcommand{\defterm}{\textbf}
\newcommand{\Flatten}[0]{\mathrm{Flatten}}
\usepackage{amsmath,amsthm}
\theoremstyle{definition} 
\newtheorem{theorem}{Theorem}[section]
\newtheorem*{theorem*}{Theorem}
\newtheorem{corollary}{Corollary}[section]
\newtheorem{proposition}{Proposition}[section]

\newtheorem{example}{Example}[section]

\newtheorem{lemma}{Lemma}[section]

\theoremstyle{definition}
\newtheorem{definition}{Definition}[section]

\newcommand{\seqnum}[1]{\href{https://oeis.org/#1}{#1}}

\theoremstyle{remark}
\newtheorem{remark}{Remark}[section]

\newtheorem{question}{Question}[section]

\newcommand{\MFR}{\mathrm{MFR}}
\newcommand{\RFR}{\mathrm{RFR}}

\newcommand{\PF}[1]{\mathrm{PF}_{#1}}

\newcommand{\Fub}{\mathrm{Fub}}

\newcommand{\NN}{\mathbb{N}}

\newcommand{\FR}[1]{\mathrm{FR}_{#1}}
\newcommand{\UFR}[1]{\mathrm{UFR}_{#1}}

\newcommand{\out}{\mathcal{O}}

\newcommand{\len}{\mathrm{len}}
\newcommand{\Comp}{\mathrm{Comp}}

\newcommand{\oeis}[1]{\href{http://oeis.org/#1}}

\newcommand{\Sym}{\mathfrak{S}}
\usepackage{dsfont}

\usepackage{tikz,multicol,enumitem}

\title{Counting $\ell$-Interval Fubini Rankings\\Through their Parking Outcomes}

\author[Ager-Hart]{Bjorn Andreas Ager-Hart}
\author[Beerbower]{Melissa Beerbower}
\author[Harris]{Pamela E. Harris}
\author[Jakovleski]{Joakim Jakovleski}
\author[McClinton]{Matt McClinton}
\address[Ager-Hart, Beerbower, Harris, Jakovleski, McClinton]{Department of Mathematical Sciences, University of Wisconsin-Milwaukee, Milwaukee, WI 53211}
\email{
\textcolor{blue}{\href{mailto:agerhar2@uwm.edu}{agerhar2@uwm.edu}},
\textcolor{blue}{\href{mailto:beerbow2@uwm.edu}{beerbow2@uwm.edu}},
\textcolor{blue}{\href{mailto:peharris@uwm.edu}{peharris@uwm.edu}},
\textcolor{blue}{\href{mailto:joakim@uwm.edu}{joakim@uwm.edu}},
\textcolor{blue}{\href{mailto:mcclin33@uwm.edu}{mcclin33@uwm.edu}}
}

\begin{document}

\begin{abstract}
    Fubini rankings with $n$ competitors are $n$-tuples with entries in $[n]=\{1,2,3,\ldots, n\}$ that encode the conclusion of a race that allows ties. 
    Since Fubini rankings are parking functions, we can  study their parking outcomes, which are permutations encoding the final parking order of the cars using the Fubini ranking as a preference list. 
    We establish that the number of Fubini rankings with $n$ competitors having a fixed parking outcome $\pi$ is given by $2^{n-k}$, where $k$ denotes the number of runs in $\pi$. 
    We then use this formula to give a new proof for the number of Fubini rankings, which is given by the Fubini numbers. 
    We also consider the set of $\ell$-interval Fubini rankings with $n$ competitors, which are Fubini rankings where at most $\ell+1$ competitors tie at any rank. 
    We show that the number of $\ell$-interval Fubini rankings with $n$ competitors having a fixed parking outcome $\pi$ is given by a product of a power of two and a product of $\ell$-Pingala numbers, where these factors depend only on the lengths of the runs that make up the parking outcome $\pi$.
    The $1$-interval Fubini rankings are known as unit Fubini rankings, and we show that the number of unit Fubini rankings having a fixed parking outcome $\pi$ is given by a product of Fibonacci numbers indexed by the lengths of the runs in $\pi$. 
    We use these results to give a formula for the number of $\ell$-interval Fubini rankings with $n$ competitors for all $\ell\in[n]$. 
    We conclude with some directions for further study.
\end{abstract}
\maketitle

\section{Introduction}
Throughout, we let $\NN=\{1,2,3,\ldots\}$ denote the set of positive integers, and for any $n\in \NN$ we let $[n]=\{1,2,3,\ldots,n\}$. The set of permutations of $[n]$ is denoted $\mathfrak{S}_n$ and we write permutations in one-line notation $\pi=\pi_1\pi_2\cdots\pi_n$. Given $\pi\in\mathfrak{S}_n$, the \defterm{runs} of $\pi$ are the maximal contiguous increasing subwords of $\pi$. For any run $\tau$ of $\pi$, we let $\len(\tau)$ denote the length of $\tau$. 
As an example, if $\pi=371246598\in\mathfrak{S}_9$, the runs of $\pi$ are $37,1246,59,$ and $8$, and have lengths $2,4,2,$ and $1$, respectively. 
Note that each run begins whenever $\pi\in\Sym_n$ has a \defterm{descent}, which is an index $i\in[n-1]$ at which $\pi_i>\pi_{i+1}$.

The number of permutations in $\mathfrak{S}_n$ with exactly $k$  descents (equivalently $k+1$ runs) is given by the \defterm{Eulerian numbers} $A(n,k)=|\{\pi\in\mathfrak{S}_n : \pi \text{ has } k \text{ descents}\}|$, which were first studied in 1755 by Euler \cite{Euler}.
Comtet \cite[p.\ 243]{Comtet} credits Worpitzky \cite{Worpitzky} with the following explicit formula for the Eulerian numbers: 
\begin{align}
A(n,k)=\sum _{i=0}^{k}(-1)^{i}{\binom {n+1}{i}}(k+1-i)^{n}.
\end{align}

We now introduce Fubini rankings.
We consider a race of $n$ competitors in which ties are allowed; for a history of this problem we recommend \cite{Races_with_ties}. 
At the conclusion of the race,  we track the competitors and their rank as follows: for each competitor $i\in[n]$, we let $r_i$ denote the rank that competitor $i$ placed in at the conclusion of the race. 
Since ties are allowed, we can have 
multiple competitors with the same rank. 
However, if $k$ competitors are tied and place in rank $j$, then the $k-1$ subsequent ranks $j + 1, j + 2, \ldots, j + k - 1$ are disallowed, and the next possible rank (if it exists) is $j+k$.
Moreover, there is always at least one competitor placing in rank 1.
Whenever the tuple $\alpha=(a_1,a_2,\ldots,a_n)\in[n]^n$ satisfies all of these conditions, we say that $\alpha$
is a \defterm{Fubini ranking} with $n$ competitors, and we denote the set of Fubini rankings with $n$ competitors by $\FR{n}$.
For example, $(1,3,1)$ is a Fubini ranking with 3 competitors, in which competitors 1 and 3 tie for rank 1 and competitor 2 places in the next available rank, which is rank 3. 
However, $(2,1,1)$ is not a Fubini ranking, as the tie for rank 1 would disallow rank 2. 
In \Cref{fig:Fubini ex}, we illustrate the 13 Fubini rankings with three competitors.
\begin{figure}
\input{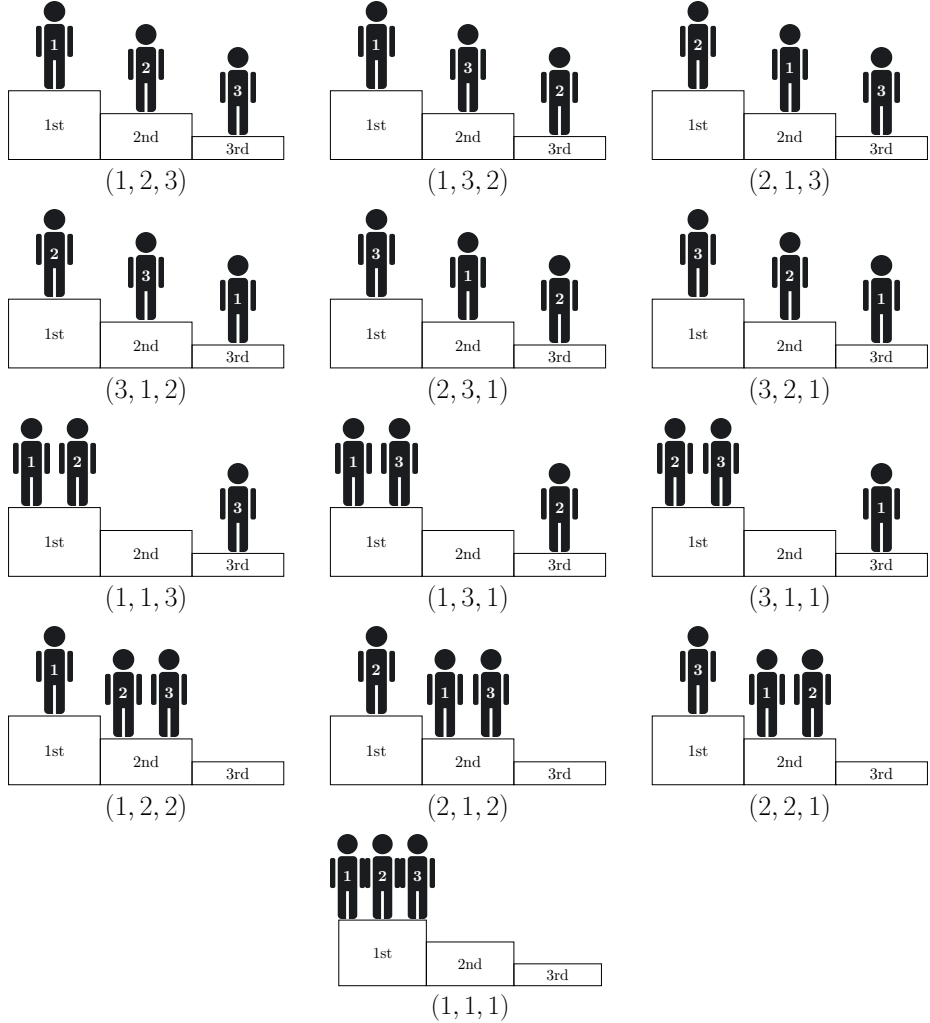}
    \caption{The 13 possible results of three competitors competing in a race allowing ties.}
    \label{fig:Fubini ex}
\end{figure}

Fubini rankings are enumerated by the \defterm{Fubini numbers}, also known as the \defterm{ordered Bell numbers}, and we denote the sequence by $\{\Fub_n\}_{n\geq 1}$.
The Fubini numbers are sequence \seqnum{A000670} in the OEIS, and satisfy 
\begin{align}\label{eq:formula for Fubs}
\displaystyle \Fub_n=\sum _{k=0}^{n-1}2^{k}\cdot A(n,k).
\end{align}

Fubini numbers arise in a variety of contexts in combinatorics. 
They count the number of possible passwords for a bike lock with buttons labeled $1$ through $n$, where entering a password requires one to push all buttons exactly once and where multiple buttons can be pressed simultaneously \cite{Permutations_and_combination_locks}. 
The Fubini numbers also arise in Conway's Napkin Problem \cite{napkin}, where guests are seated at a circular table one at a time, and have a preference for a napkin to their right or left. 
Moreover, there has been much work in studying Fubini rankings in connection to parking functions, see for example  \cite{weak,rFubini}, and we continue this line of study.
To make our approach precise, we begin by providing some background information on parking functions. 

A tuple $\alpha=(a_1,a_2,\ldots,a_n)\in[n]^n$ is a \defterm{parking function of length $\boldsymbol{n}$} if and only if the weakly increasing rearrangement of $\alpha$, denoted $\alpha^{\uparrow}=(a_1',a_2',\ldots,a_n')$ 
satisfies $a_i'\leq i$ for all $i\in[n]$. We denote the set of all parking functions of length $n$ by $\PF{n}$. 
Parking functions were named as such by Konheim and Weiss \cite{Konheim1966AnOD} as they can also be described via a parking procedure where $n$ cars are in a queue to park on a one-way street with $n$ parking spots. 
Cars enter the street sequentially, first attempting to park in their preferred spot. 
If that spot is occupied, then they park in the first available spot they encounter (if any). 
For each $i\in[n]$, if car $i$ has preference $a_i$  (i.e., 
$\alpha=(a_1,a_2,\ldots,a_n)$) and all cars are able to park, then we say $\alpha$ is a parking function of length $n$.
For example, the tuple $\alpha=(4,1,1,1,5)$ is a parking function of length $5$ and the final order in which the cars park on the street is illustrated in \Cref{fig:parking 41115}. 
The final parking order of the cars on the street is referred to as the \defterm{parking outcome} of $\alpha$, denoted $\out(\alpha)$. 
The parking outcome of a parking function is always a permutation of $[n]$, which we write in one-line notation.
In other words, $\out(\alpha)=\pi=\pi_1\pi_2\cdots\pi_n\in\Sym_n$, denotes that, for each $i\in[n]$, car $\pi_i$ parked in spot $i$. 
In our example, $\out(\alpha)=\out((4,1,1,1,5))=23415$.

\begin{figure}
         \centering 
         \resizebox{\textwidth}{!}{
         \begin{tikzpicture}
         \node at(0,0){\includegraphics[width=1in]{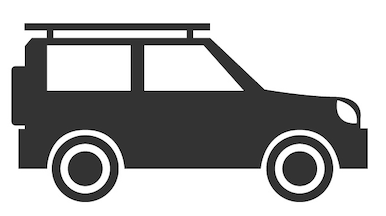}}; 
         \node at(3,0)
         {\includegraphics[width=1in]{car.png}}; \node at(6,0)
         {\includegraphics[width=1in]{car.png}}; \node at(9,0)
         {\includegraphics[width=1in]{car.png}}; \node at(12,0)
          {\includegraphics[width=1in]{car.png}};
         \node at(0,-.15){\textcolor{white}{\textbf{2}}}; 
         \node at(3,-.15){\textcolor{white}{\textbf{3}}}; 
         \node at(6,-.15){\textcolor{white}{\textbf{4}}}; 
         \node at(9,-.15){\textcolor{white}{\textbf{1}}}; 
         \node at(12,-.15){\textcolor{white}{\textbf{5}}}; 
         \node at(.9,1.)
         {\includegraphics[width=.5in]{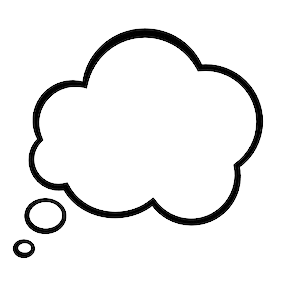}}; 
         \node at(3.9,1.)
         {\includegraphics[width=.5in]{callout.png}}; 
         \node at(6.9,1.)
         {\includegraphics[width=.5in]{callout.png}}; 
         \node at(9.9,1.)
         {\includegraphics[width=.5in]{callout.png}}; 
         \node at(12.9,1.)
         {\includegraphics[width=.5in]{callout.png}}; 
         \node at(.9,1.1){\textbf{1}}; 
         \node at(3.9,1.1){\textbf{1}}; 
         \node at(6.9,1.1){\textbf{1}}; 
         \node at(9.9,1.1){\textbf{4}}; 
         \node at(12.9,1.1){\textbf{5}}; 
         \draw[ultra thick](-1.25,-.6)--(1.25,-.6); 
         \draw[ultra thick](1.75,-.6)--(4.25,-.6); 
         \draw[ultra thick](4.75,-.6)--(7.25,-.6); 
         \draw[ultra thick](7.75,-.6)--(10.25,-.6); 
         \draw[ultra thick](10.75,-.6)--(13.25,-.6); 
         \node at(0,-1){\textbf{1}}; 
         \node at(3,-1){\textbf{2}}; 
         \node at(6,-1){\textbf{3}}; 
         \node at(9,-1){\textbf{4}}; 
         \node at(12,-1){\textbf{5}}; 
         \end{tikzpicture} 
         }
    \caption{Parking cars using the preferences in $\alpha=(4,1,1,1,5)$. Car image designed by Freepik and callout designed by macrovector / Freepik.}
    \label{fig:parking 41115}
\end{figure}

All Fubini rankings are parking functions, see \cite[Lemma 2.3]{rFubini}, and our goal is to enumerate Fubini rankings via their parking outcomes. 
To this end, 
for $\alpha\in\FR{n}$ we let $\out_{\FR{n}}(\alpha)$ denote the outcome of $\alpha$ using the parking procedure. 

Then, for a fixed permutation $\pi \in \Sym_n$, we let 
\[\out_{\FR{n}}^{-1}(\pi)=\{\alpha\in\FR{n}:\out_{\FR{n}}(\alpha)=\pi\},\]
which is the set of Fubini rankings with $n$ competitors that, when treated as parking functions, have final parking outcome $\pi$.
In \Cref{sec:FRs}, we establish  properties of Fubini rankings including equivalent characterizations (see \Cref{prop:equivalent characterizations of Fubini}), and give a way to describe Fubini rankings as ordered set partitions (see \Cref{lem:bijection between Fubini rankings and ordered set partitions}). Using these properties, we establish that the number of Fubini rankings with parking outcome $\pi\in\Sym_n$ is $2^{n-k}$, where $k$ is the number of runs of $\pi$ (see \Cref{cor: fubini count is 2^n-k}). We also establish that the number of Fubini rankings is given by the sum 
    \begin{equation}\label{eq:main results together}
\Fub_n=\sum_{\pi\in\mathfrak{S}_n}|\out_{\FR{n}}^{-1}(\pi)|=\sum_{k=1}^{n}2^{n-k}\cdot A(n,k-1) 
\end{equation}
    (see \Cref{corr: fubini numbers are odd}).
    The equivalence between \Cref{eq:formula for Fubs} and \Cref{eq:main results together} follows from the symmetry of the Eulerian numbers; $A(n,k)=A(n,n-1-k)$.
Since the number of Fubini rankings with a fixed parking outcome depends only on the number of runs of $\pi$, and since $A(n,k-1)$ counts the number of permutations with exactly $k$ runs (equivalently $k-1$ descents), our results establish a combinatorial proof of \Cref{eq:formula for Fubs}.

We also study $\ell$-interval Fubini rankings for $0\leq \ell\leq n-1$.
Defined by Aguilar-Fraga et al.\ \cite[p.\ 16]{aguilarfraga2024interval}
$\ell$-interval Fubini rankings with $n$ competitors are the subset of Fubini rankings with $n$ competitors in which at most $\ell+1$ (with $0\leq \ell\leq n-1$) competitors tie at any single rank. 
For example, $(1,1,1,4,5)$ is an $\ell$-interval Fubini ranking for $\ell \geq 2$, but not for $\ell < 2$. 
We let $\FR{n}(\ell)$ be the set of $\ell$-interval Fubini rankings with $n$ competitors. 
Note $\FR{n}(0)=\Sym_n$, where we write the permutations as tuples. Also, the set $\FR{n}(1)$ is referred to as the set of unit Fubini rankings, which have connections to counting Boolean intervals in the weak Bruhat order of the symmetric group \cite{weak}. 
We let $\FR{n}^\uparrow(\ell)$ denote the weakly increasing $\ell$-interval Fubini rankings with $n$ competitors.

To achieve our goal of counting $\ell$-interval Fubini rankings, we count $\ell$-interval Fubini rankings with $n$ competitors having a fixed parking outcome. 
To this end, 
we define the $\ell$-Pingala numbers, $\{P_\ell(n)\}_{n\geq1}$, via an $(\ell+1)$-term recurrence where the seed values are increasing powers of two (see \Cref{def:ell-Pingala}). 
If $\ell=1$, then $P_1(n)=F_n$, where $F_n$ is the $n$-th Fibonacci number with seed values $F_1=1$ and $F_2=1$. 
Aguilar-Fraga et al.\ \cite{aguilarfraga2024interval} showed that $|\FR{n}^\uparrow(\ell)|=P_\ell(n)$.

Using this result, we determine a product formula for the number of $\ell$-interval Fubini rankings with $n$ competitors whose parking outcome is $\pi\in\Sym_n$ (see \Cref{thm:big theorem}). 
The product formula involves a power of two and a product of  $\ell$-Pingala numbers, where both the power of two and the Pingala numbers appearing in this product depend only on the lengths of the runs that make up the parking outcome.
In the special case of $\ell=1$, the product described involves only Fibonacci numbers (see \Cref{thm:number of UFR with outcome pi}).

We now describe the organization of this article. In  \Cref{sec:FRs}, we provide background on Fubini rankings and some preliminary results needed to establish our count for Fubini rankings with a fixed parking outcome.
In \Cref{sec:UFRs}, we enumerate unit Fubini rankings with $n$ competitors having a fixed parking outcome and we use this result to give a new formula for the number of unit Fubini rankings.
In \Cref{sec:ell FRs}, we enumerate 
$\ell$-interval 
Fubini rankings with $n$ competitors having a fixed parking outcome and we use this result to give a new formula for the number of $\ell$-interval Fubini rankings with $n$ competitors.
We conclude with \Cref{sec:future} where we detail some open problems for further study.

\section{Enumerating Fubini Rankings by their Parking Outcome}\label{sec:FRs}
In this section, we provide some foundational results regarding Fubini rankings that are useful in enumerating them based on their parking outcome. 
In order to clarify our set-up, we make the following remark.
\begin{remark}
    When we study Fubini rankings we take on two perspectives simultaneously. 
    We think of Fubini rankings in terms of their definition as tuples encoding the ranks for a competition that allows ties, and in this context we use the words ``competitors'' and ``ranks.'' 
    We also think of Fubini rankings as parking functions, in which case ``competitors'' are equivalent to ``cars'' and ``ranks'' are equivalent to ``parking preferences.'' Moreover, the parking outcome of a Fubini ranking, is the final order of the cars using the parking process defining parking functions. 
    Henceforth, we use these two equivalent contexts and language interchangeably, based on whichever we find more appropriate to our purpose.
\end{remark}

We begin with the technical definition of Fubini rankings as presented in \cite[Definition 2.4]{weak}.

\begin{definition}\label{def:Fubini rankings}
    A \defterm{Fubini ranking} with $n$ competitors is a tuple $\alpha=(a_1,a_2,\ldots,a_n)\in [n]^n$ that records a valid ranking over $n$ competitors with ties allowed (i.e., multiple competitors can be tied and have the same rank).
    However, if $k$ competitors are tied at rank $i$, then the $k-1$ subsequent ranks $i + 1, i + 2, \ldots, i + k - 1$ are disallowed.
\end{definition}

Next, we introduce a means of verifying that a tuple is a Fubini ranking. 
\begin{lemma}[Characterization of Fubini rankings]\label{lemma: weakly increasing Fubini proof}
    Let $\alpha=(a_1,a_2,\ldots, a_n)\in[n]^n$ 
    and denote the weakly increasing rearrangement of $\alpha$ by $\alpha^\uparrow=(a'_1,a'_2,\ldots, a'_n)$, where $a_i'\leq a_{i+1}'$ for all $i\in[n-1]$. 
    Then $\alpha\in \FR{n}$ if and only if  for any $i\in [n]$, 
    either $a'_i=i$ or $a'_i=a'_{i-1}$. 
\end{lemma}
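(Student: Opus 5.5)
We will prove both directions by making the informal definition precise in terms of the sorted tuple. Since being a Fubini ranking depends only on the multiset of ranks, and not on which competitor receives which rank, it suffices to work with $\alpha^\uparrow$. We adopt the convention $a'_0:=a'_1$ (or simply treat $i=1$ separately), so the condition for $i=1$ reads $a'_1=1$. This encodes the requirement that some competitor places in rank $1$.

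We first reformulate the definition. The distinct values of $\alpha^\uparrow$ are $b_1<b_2<\cdots<b_m$, with multiplicities $k_1,\ldots,k_m$ summing to $n$. We claim that $\alpha\in\FR{n}$ if and only if
\[
b_1=1 \quad\text{and}\quad b_{j+1}=b_j+k_j \text{ for all } j\in[m-1].
\]
This is exactly the paper's description of the ranking rule. Someone places first. If $k_j$ competitors tie at rank $b_j$, the ranks $b_j+1,\ldots,b_j+k_j-1$ are disallowed, and the next rank used is $b_j+k_j$. Equivalently, $b_j=1+k_1+\cdots+k_{j-1}$.

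\emph{Forward direction.} Assume the block condition holds. The entries equal to $b_j$ occupy the positions $i$ with
\[
k_1+\cdots+k_{j-1}<i\le k_1+\cdots+k_j ,
\]
so the first such position is $i=b_j$. Hence at the first index of each block we have $a'_i=b_j=i$. At every later index of the block we have $a'_i=a'_{i-1}$. So every $i$ satisfies the condition of the lemma.

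\emph{Converse.} Assume that for every $i$, either $a'_i=i$ or $a'_i=a'_{i-1}$. Consider an index $i$ where a new value starts, meaning $i=1$ or $a'_i\neq a'_{i-1}$. There the second alternative fails, so $a'_i=i$. In particular $b_1=a'_1=1$. The $(j+1)$-st block starts at position $1+k_1+\cdots+k_j$, so $b_{j+1}=1+k_1+\cdots+k_j=b_j+k_j$. This is precisely the block condition, so $\alpha\in\FR{n}$.

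The main obstacle is the first step: justifying that the informal Definition~\ref{def:Fubini rankings} is equivalent to the block condition. In particular, one must be clear that a Fubini ranking does not skip ranks beyond those disallowed by ties. The next rank used is exactly $b_j+k_j$, not merely some rank at least $b_j+k_j$, and rank $1$ is always used. We will state this explicitly as the meaning of a valid ranking, citing the introduction's description (``the next possible rank is $j+k$'' and ``at least one competitor placing in rank 1''). After that, the argument is a routine index computation.
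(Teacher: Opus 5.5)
Your proof is correct and is essentially the paper's argument: both work with $\alpha^\uparrow$ and the multiplicities of its distinct values, and both rest on each new rank being one more than the number of competitors ranked ahead of it (your block condition is part (b) of \Cref{prop:equivalent characterizations of Fubini}), though your remark that membership in $\FR{n}$ depends only on the multiset of ranks justifies passing to $\alpha^\uparrow$ more directly than the paper's appeal to permutation invariance of parking functions. The one slip is the convention $a'_0:=a'_1$, which makes the $i=1$ condition vacuous (then $(2,2)$ would satisfy the hypothesis, and your step ``the second alternative fails'' breaks at $i=1$); take $a'_0:=0$ instead, which gives the reading $a'_1=1$ that you actually use.
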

\begin{proof}
    $(\implies)$ Let $\alpha\in\FR{n}$.  
    For a ranking to be valid, there has to be at least one competitor ranking first. Thus, there is some number $x \in [n]$ such that $a_x = 1$. Since 1 is the smallest rank available, $a'_1=1$ in the rearrangement.
    By definition, for any Fubini ranking, if $k_1$ competitors place first, then the next available rank is $k_1+1$. If $k_1$ competitors place first, and $k_2$ competitors place in rank $k_1+1$, then the next available rank is $k_2+k_1+1$. 
    If we continue in this way, for any Fubini ranking $\alpha$, when $i\geq 2$ the $i$-th competitor in the weakly increasing rearrangement $\alpha^\uparrow$  has two options:
    \begin{itemize}
        \item they either continue a tie with some other competitor from the original Fubini ranking $\alpha$, which would mean $a'_{i} = a'_{i-1}$, or
        \item they are the first competitor in a new rank from the original Fubini ranking. Since $\alpha^\uparrow$ is sorted, we know this rank is not in the set $[i-1]$. Hence, the $i$th competitor will take the next available rank, which is $i$. Therefore, $a_i' = i$.
    \end{itemize} 
    So, we can see that for any $\alpha\in \FR{n}$ rearranged in weakly increasing order, the first entry satisfies $a'_1=1$, and the $i$th entry (for $i\geq 2$) either satisfies $a'_i=a'_{i-1}$ or $a'_i$ begins a new rank, meaning it satisfies $a_i'=i$. 
    
    \noindent $(\impliedby)$ Let $\alpha=(a_1,a_2,\ldots, a_n)\in[n]^n$ such that $\alpha^\uparrow$, the weakly increasing rearrangement of $\alpha$, satisfies either $a'_i=a'_{i-1}$ or $a'_i=i$ for each $i\in[n]$.
    We want to show that a tuple with this property is a Fubini ranking. 
    We recall that parking functions are permutation invariant. That is, given any permutation $\sigma\in\Sym_{n}$
and any parking function $\alpha=(a_1,a_2,\ldots,a_n)\in\PF{n}$, the action of $\sigma$ on $\alpha$ defined by 
\[\sigma (\alpha)=(a_{\sigma_1^{-1}},a_{\sigma_2^{-1}},\ldots,a_{\sigma_n^{-1}})\]
returns a parking function. For a formal proof of this fact see \cite[Theorem 3.2]{costsharing}.
    Since Fubini rankings are parking functions, they too are permutation invariant. Thus it is irrelevant which competitor places in which rank so long as the ranking is valid. 
    So to prove the claim it suffices to show that $\alpha^\uparrow$ is a Fubini ranking.
    To do so, by \Cref{def:Fubini rankings}, it suffices to show that whenever $k$ competitors tie at rank $i$, then the ranks $i+1,i+2,\ldots,i+k-1$ are disallowed, i.e., are not in $\alpha$.

    Let $\textrm{content}(\alpha)=\{i: i\in \alpha\}=\{r_1,r_2,\ldots,r_t\}\subseteq[n]$ and assume 
    $r_1<r_2<\cdots<r_t$.
    Let $k_i$ be the number of instances of $i$ in $\alpha$ for each $i\in \textrm{content}(\alpha)$.
    By the condition defining $\alpha$, we know that $a'_1 = 1$, so $k_1 \geq 1$ and $r_1=1$.
    Moreover, the first $k_1$ entries in $\alpha^\uparrow$ are 1. 
    By our condition on the tuple $\alpha^\uparrow$, we know that 
    $a'_{k_1+1}=k_1+1$, as we only have $k_1$ ones appearing in indices $1,2,\ldots, k_1$ in $\alpha^\uparrow$.
This implies that $r_2=k_1+1$ and the ranks $2,3,\ldots,k_1$ do not appear in $\alpha^\uparrow$ nor in $\alpha$.

    Now there are $k_2$ instances of the rank $r_2=k_1+1$ appearing in $\alpha^\uparrow$ (and hence in $\alpha$).
    This means that entries $a'_{k_1+1}=a'_{k_1+2}=\cdots=a'_{k_1+k_2}=r_2=k_1+1$ in 
    $\alpha^{\uparrow}$.
    By our condition on the tuple $\alpha^\uparrow$, we know that 
    $a'_{k_1+k_2+1}=k_1+k_2+1$, as we only have $k_2$ instances of the value $r_2$ appearing in indices $k_1+1,k_1+2,\ldots, k_1+k_2+1$ in $\alpha^\uparrow$.
    This implies that $r_3=k_1+k_2+1$ and the ranks $k_1+2,k_1+3,\ldots,k_1+k_2$ do not appear in $\alpha^\uparrow$ nor in $\alpha$.

    For induction, assume that for $j\leq t-1$, $r_{j}=1+\sum_{m=1}^{j-1}k_m$, and 
    \[a'_{1+\sum_{m=1}^{j-1}k_m}=a'_{2+\sum_{m=1}^{j-1}k_m}=\cdots=a'_{\sum_{m=1}^{j}k_m}=r_j,\]
    and that the ranks $2+\sum_{m=1}^{j-1}k_m,3+\sum_{m=1}^{j-1}k_m,\ldots,\sum_{m=1}^{j}k_m$ do not appear in $\alpha^\uparrow$ nor in $\alpha$.

    Now consider rank $r_{j+1}$.
    In $\alpha^\uparrow$, we have that $a'_{r_{j+1}}=r_{j+1}$, which is the first time we see the rank $r_{j+1}$ in $\alpha^\uparrow$.
    Since the ranks are listed in increasing order ($1=r_1<r_2<\cdots<r_j<r_{j+1}$), the number of times $r_i$ appears in $\alpha^\uparrow$ is $k_i$, and $\alpha^\uparrow$ is sorted, we have that $r_{j+1}=1+\sum_{m=1}^{j}k_m$.
    Hence, we know that the $k_{j+1}$ instances of the rank $r_{j+1}$ appear in indices $1+\sum_{m=1}^{j}k_m,2+\sum_{m=1}^{j}k_m,\ldots, \sum_{m=1}^{j+1}k_m$ in $\alpha^\uparrow$.
    This implies that the ranks $2+\sum_{m=1}^{j}k_m,3+\sum_{m=1}^{j}k_m,\ldots,\sum_{m=1}^{j+1}k_m$ do not appear in $\alpha^\uparrow$ nor in $\alpha$.

    This shows that whenever there are $k$ competitors tied at rank $i$, the correct set of ranks are disallowed (see \Cref{def:Fubini rankings}), so $\alpha^\uparrow $ is a Fubini ranking. 
    As we noted, since Fubini rankings are invariant under permutations, $\alpha$ is also a Fubini ranking, which completes the proof.
\end{proof}
To highlight the use of \Cref{lemma: weakly increasing Fubini proof} we provide the following example.

\begin{example}
    Consider the tuple $\alpha=(5,1,2,5,3,1)$. Hence $\alpha^\uparrow=(1,1,2,3,5,5)$. Notice 
     $a'_{3}\neq a_2'$ and $a'_{3}\neq3$. So, by \Cref{lemma: weakly increasing Fubini proof}, $(5,1,2,5,3,1)$ is not a Fubini ranking. This can also be observed since the tie at rank 1 would disallow rank 2.
\end{example}

Next, we recall that an \defterm{ordered set partition} of $[n]$ is an ordered collection of nonempty subsets (also called blocks) $(B_1,B_2,\ldots,B_k)$ such that $\cup_{i=1}^kB_i=[n]$ and we write the elements in each block in increasing order.
Fubini rankings are in bijection with ordered set partitions and we recall that bijection below.
\begin{lemma}[Theorem 3.13 \cite{barreto2025restrictedfubinirankingsrestricted}]\label{lem:bijection between Fubini rankings and ordered set partitions}
Let $\mathcal{OP}_n$ be the collection of ordered set partitions of $[n]$. 
Define $\psi:\FR{n}\to \mathcal{OP}_n$ as follows: Given $\alpha \in \FR{n}$,
\[\psi(\alpha)=(B_1,B_2,\ldots,B_t),\]
where $t$ denotes the number of distinct ranks appearing in $\alpha$, which we denote by $r_1<r_2<\cdots<r_t$, and where $B_j=\{i:a_i=r_j\}$ and the entries in $B_j$ are written in increasing order. Then $\psi$ is a bijection.
\end{lemma}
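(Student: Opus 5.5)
We show that $\psi$ is well defined, injective, and surjective, by constructing an explicit inverse $\phi:\mathcal{OP}_n\to\FR{n}$. The tool throughout is \Cref{lemma: weakly increasing Fubini proof}: a tuple is a Fubini ranking exactly when its sorted rearrangement consists of constant blocks, each block starting at the index equal to its value.

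\emph{Well-definedness.} Let $\alpha\in\FR{n}$ have distinct ranks $r_1<\cdots<r_t$. Each $B_j=\{i:a_i=r_j\}$ is nonempty, because $r_j$ occurs in $\alpha$. The sets $B_j$ are pairwise disjoint, and their union is $[n]$, since every index $i$ has exactly one value $a_i$. Ordering the blocks by increasing rank gives a genuine ordered set partition.

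\emph{Key identity.} By the proof of \Cref{lemma: weakly increasing Fubini proof}, with $k_m=|B_m|$ the ranks are forced to be
\[
r_j=1+\sum_{m=1}^{j-1}|B_m| .
\]
So the rank values are determined entirely by the block sizes and their order. This identity is the main step, and the induction in that lemma's proof already supplies it.

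\emph{The inverse map.} Given $(B_1,\ldots,B_t)\in\mathcal{OP}_n$, define $\phi$ by setting $a_i=1+\sum_{m<j}|B_m|$ whenever $i\in B_j$.

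\emph{$\phi$ lands in $\FR{n}$.} Sort the resulting tuple. The value $1+\sum_{m<j}|B_m|$ fills exactly positions $1+\sum_{m<j}|B_m|$ through $\sum_{m\le j}|B_m|$. Hence each entry of the sorted tuple either equals its index (at the start of a block) or equals the preceding entry. Every value lies in $[n]$, and \Cref{lemma: weakly increasing Fubini proof} gives $\phi(B_1,\ldots,B_t)\in\FR{n}$.

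\emph{$\psi\circ\phi=\mathrm{id}$.} The values assigned by $\phi$ strictly increase with $j$, because each $B_m$ is nonempty. So the $j$-th smallest distinct rank of $\phi(B_1,\ldots,B_t)$ is the value on $B_j$, and $\psi$ recovers $(B_1,\ldots,B_t)$. This proves surjectivity.

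\emph{$\phi\circ\psi=\mathrm{id}$.} By the key identity, $\phi(\psi(\alpha))$ assigns each $i\in B_j$ the value $r_j=a_i$, so it returns $\alpha$. This proves injectivity.

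Therefore $\psi$ is a bijection. The only real obstacle is the key identity; everything else is bookkeeping.
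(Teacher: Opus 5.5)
Your proof is correct and complete. The explicit inverse $\phi$ lands in $\FR{n}$ by \Cref{lemma: weakly increasing Fubini proof}, and both $\psi\circ\phi$ and $\phi\circ\psi$ are the identity. The only nontrivial input is the identity $r_j=1+\sum_{m<j}|B_m|$.

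The paper gives no proof of this lemma; it imports it from Barreto et al., so there is no in-paper argument to compare against. Your key identity is exactly what the paper later records as \Cref{lemma: ranks determined by number of tied competitors} and part (b) of \Cref{prop:equivalent characterizations of Fubini}.

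You derive it from the earlier characterization lemma, which is the right choice because it keeps the argument free of any dependence on $\psi$. Your citation ``by the proof of'' is a little loose, though; the one-line version is cleaner. The forward direction of \Cref{lemma: weakly increasing Fubini proof} gives condition (c) for $\alpha^\uparrow$. The first occurrence of $r_j$ in $\alpha^\uparrow$ sits at index $1+\sum_{m<j}|B_m|$ and differs from its predecessor (or has none when $j=1$), so (c) forces it to equal its index.
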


\begin{example}\label{Surprise!}
The bijection $\psi$ maps the Fubini ranking $\alpha=(5,1,5,2,2,2)$ to  the ordered set partition $\psi(\alpha)=(\{2\},\{4,5,6\},\{1,3\})$. 
Notice that $\out_{\FR{n}}((5,1,5,2,2,2))=245613$, which is the same as the ordered set partition removing all braces, commas, and parentheses. 
\end{example} 

The concluding remark in \Cref{Surprise!} that the outcome of a Fubini ranking is the ordered set partition with all braces, commas, and parenthesis removed is not a coincidence, and we prove this in \Cref{thm:ordered set partition is the outcome}. However, to establish that proof  we need the following preliminary results.

\begin{lemma}[Ranks Lemma]\label{lemma: ranks determined by number of tied competitors}
    Let $\alpha\in\FR{n}$ have ranks $1=r_1<r_2< \cdots< r_t$.
    Then $r_i = 1 + \sum_{j=1}^{i-1} |B_j|$.
\end{lemma}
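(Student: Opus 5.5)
The plan is to use the characterization of Fubini rankings in \Cref{lemma: weakly increasing Fubini proof} together with the description of the blocks in \Cref{lem:bijection between Fubini rankings and ordered set partitions}, and to argue by induction on $i$. Here $B_j=\{m : a_m=r_j\}$. Since each block $B_j$ consists exactly of the competitors holding rank $r_j$, we have $|B_j|$ equal to the multiplicity $k_j$ of the value $r_j$ in $\alpha$. In the weakly increasing rearrangement $\alpha^\uparrow=(a'_1,\ldots,a'_n)$, the copies of $r_1$ occupy the first $|B_1|$ positions, the copies of $r_2$ the next $|B_2|$, and so on. So the copies of $r_i$ occupy positions $1+\sum_{j=1}^{i-1}|B_j|$ through $\sum_{j=1}^{i}|B_j|$.

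\textbf{Base case.} For $i=1$, the value $a'_1$ equals either $1$ or $a'_0$. Since there is no $a'_0$, the characterization forces $a'_1=1$, hence $r_1=1=1+\sum_{j=1}^{0}|B_j|$.

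\textbf{Inductive step.} Fix $i\ge 2$ and set $p=1+\sum_{j=1}^{i-1}|B_j|$. By the sorting observation above, $a'_p=r_i$ is the first occurrence of $r_i$ in $\alpha^\uparrow$, while $a'_{p-1}=r_{i-1}$. Because $r_{i-1}<r_i$, we have $a'_p\neq a'_{p-1}$. \Cref{lemma: weakly increasing Fubini proof} then forces $a'_p=p$, so $r_i=1+\sum_{j=1}^{i-1}|B_j|$.

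Strictly speaking, this step does not even need the induction hypothesis, only the position count. I would nonetheless phrase it inductively, to mirror the argument already given in the proof of \Cref{lemma: weakly increasing Fubini proof}, where exactly this formula appeared as $r_j=1+\sum_{m<j}k_m$.

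\textbf{Main obstacle.} The only delicate point is justifying carefully that the first occurrence of $r_i$ in $\alpha^\uparrow$ sits at position $p$. This requires that every entry smaller than $r_i$ is one of $r_1,\ldots,r_{i-1}$, which holds because these are all the distinct ranks below $r_i$. It also requires that their total multiplicity is $\sum_{j<i}|B_j|$, which holds because $\psi$ partitions the competitors by rank. Once this is stated, the result follows immediately.
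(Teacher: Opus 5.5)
Your proof is correct, but it rests on a different key ingredient from the paper's. You derive the formula from the forward direction of \Cref{lemma: weakly increasing Fubini proof} by a position count in $\alpha^\uparrow$. The copies of $r_1,\ldots,r_{i-1}$ fill exactly the first $p-1=\sum_{j<i}|B_j|$ positions, and $p-1\ge i-1\ge 1$ because every block is nonempty, so $a'_{p-1}$ exists. Then $a'_p=r_i\neq r_{i-1}=a'_{p-1}$, which forces $a'_p=p$.

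The paper never looks at $\alpha^\uparrow$ here. It argues directly from \Cref{def:Fubini rankings}, by induction on the rank index: the $\sum_{j\le k}|B_j|$ competitors in the first $k$ ranks disallow ranks $1,\ldots,\sum_{j\le k}|B_j|$, so the next available rank is $1+\sum_{j\le k}|B_j|$.

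Your version is more mechanical, since each $i$ is handled independently and no induction hypothesis is actually used. It is also legitimate in the paper's logical order, because the characterization lemma is proved first. In effect you obtain (a)$\Rightarrow$(b) of \Cref{prop:equivalent characterizations of Fubini} by passing through (c). Your computation is the same one the paper carries out in the backward direction of \Cref{lemma: weakly increasing Fubini proof}.

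The trade-off is that the paper proves the forward direction of that lemma with the same ``next available rank'' reasoning. So your argument relocates the substantive content (that a rank is never skipped unless it is disallowed) rather than avoiding it. The paper's proof confronts that point directly at the level of the definition.
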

\begin{proof}
  By \Cref{lem:bijection between Fubini rankings and ordered set partitions}, $\psi(\alpha)=(B_1,B_2,\ldots,B_t)$ where, for each $i\in[t]$, $|B_i|$ is the number of competitors tied at rank $r_i$. 
  By definition of a Fubini ranking, having $|B_1|$ competitors tied at rank 1 disallows ranks $2,3,\ldots,|B_1|$, so the next available rank is $r_2=1+|B_1|$. 
  
  For induction, assume that for all $k<t$, $r_k=1+\sum_{j=1}^{k-1}|B_j|$ and consider $r_{k+1}$. 
  There are $\sum_{j=1}^{k}|B_j|$, competitors that have placed in ranks $r_1,r_2,\ldots,r_{k}$. 
  Since the ranks are ordered from smallest to largest, we know that the competitors placing in ranks $r_1,r_2,\ldots,r_k$, which are those in the set $\cup_{j=1}^{k}B_j$, have 
  disallowed ranks $1,2,\ldots,\sum_{j=1}^k|B_j|$.
  Thus, the next available rank would be $r_{k+1}=1+\sum_{j=1}^k|B_j|$, as claimed. 
\end{proof}

We collect our characterizations of Fubini rankings in the following result.

\begin{proposition}\label{prop:equivalent characterizations of Fubini}
    Let $\alpha = (a_1, a_2,\ldots, a_n) \in [n]^n$, and let $\alpha^{\uparrow} = (a'_1,a_2', \ldots, a'_n)$ be its weakly increasing rearrangement. The following are equivalent:
    \begin{enumerate}
        \item[(a)] The tuple $\alpha$ is a Fubini ranking.
        \item[(b)] Let $\{r_1, r_2, \ldots, r_t\}$ be the unique entries of $\alpha$, sorted in increasing order ($r_{i-1} < r_i$ for any $i$), and for each $r_i$ let $k_i$ be the number of occurrences of $r_i$ in $\alpha$. Then, for every $i \in [t]$, $r_i = 1 + \sum_{j=1}^{i-1} k_j$. (Note $\sum_{j=1}^0 k_j$ is the empty sum, which equals 0).
        \item[(c)] For every $i \in [n]$, either $a'_i = i$ or $a'_i = a'_{i-1}$. 
    \end{enumerate}
\end{proposition}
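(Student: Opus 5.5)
We prove the equivalences by combining the two preceding lemmas with one direct argument, so that we obtain the chain (a)$\Leftrightarrow$(c) and (a)$\Rightarrow$(b)$\Rightarrow$(c).

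The equivalence (a)$\Leftrightarrow$(c) is exactly \Cref{lemma: weakly increasing Fubini proof}, so nothing new is needed there; we only need to be careful that for $i=1$ the condition reads $a'_1=1$, since $a'_0$ is undefined.

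For (a)$\Rightarrow$(b), let $\alpha\in\FR{n}$ and apply the bijection $\psi$ of \Cref{lem:bijection between Fubini rankings and ordered set partitions}. This gives $\psi(\alpha)=(B_1,\ldots,B_t)$ with $B_j=\{i: a_i=r_j\}$, so $|B_j|=k_j$. The Ranks Lemma (\Cref{lemma: ranks determined by number of tied competitors}) then gives $r_i=1+\sum_{j=1}^{i-1}|B_j|=1+\sum_{j=1}^{i-1}k_j$, which is (b).

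For (b)$\Rightarrow$(c), sort $\alpha$. Because the $r_j$ are increasing, the entries of $\alpha^\uparrow$ equal to $r_j$ occupy exactly the positions
\[1+\sum_{m=1}^{j-1}k_m,\ \ldots,\ \sum_{m=1}^{j}k_m.\]
Take any $i\in[n]$ and let $j$ be the block containing position $i$. There are two cases.
\begin{itemize}
\item If $i$ is the first position of that block, then $i=1+\sum_{m=1}^{j-1}k_m=r_j=a'_i$ by (b), so $a'_i=i$.
\item Otherwise $i-1$ lies in the same block, so $a'_i=a'_{i-1}=r_j$.
\end{itemize}
For $i=1$ the first case applies with $j=1$, giving $a'_1=r_1=1$. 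This establishes (c), and together with \Cref{lemma: weakly increasing Fubini proof} closes the cycle of implications.

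The only mildly delicate step is the bookkeeping in (b)$\Rightarrow$(c): we must identify precisely which positions of $\alpha^\uparrow$ hold each value $r_j$, which uses that the $k_j$ sum to $n$ and that the $r_j$ are strictly increasing. Everything else is a direct citation of the earlier lemmas.
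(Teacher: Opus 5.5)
Your proposal is correct and follows the paper's route: (a)$\Rightarrow$(b) by the Ranks Lemma, (b)$\Rightarrow$(c) by locating the positions of $\alpha^\uparrow$ that hold each value $r_j$, and (c)$\Rightarrow$(a) by the backward direction of the characterization lemma. The paper phrases the middle step as an induction on the rank index, but it is the same computation, and your extra citation of (a)$\Rightarrow$(c) is redundant once the cycle closes but harmless.
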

\begin{proof}
We proceed by establishing the following implications:
    \begin{itemize}[leftmargin=.15in]
        \item (a)$\implies$(b): This is \Cref{lemma: ranks determined by number of tied competitors}. 
    
        \item (b)$\implies$(c): We want to show that for each rank $r_i$, the entries in $\alpha^{\uparrow}$ equal to rank $r_i$ respect one of the conditions of part (c). 

        We proceed by induction on $i$. Consider $i=1$. By our assumption, we know that $r_1 = 1 + \sum_{j=1}^{0} k_j = 1+0 = 1$, meaning that $a'_1 = 1$. Since there are $k_1$ instances of $r_1 = 1$ and $\alpha^{\uparrow}$ is weakly increasing, we have that $a'_1 = a'_2 = \cdots = a'_{k_1} = 1$. Therefore, for all integers $m$ in the interval $[2, k_1]$ we have $a'_m = a'_{m-1}$.  

        Assume, for induction, that $a'_m = a'_{m-1}$ or $a'_m = m$ whenever $a'_m \in \{r_1, r_2, \ldots, r_{i-1}\}$. Then this holds for $\sum_{j=1}^{i-1} k_j = r_i-1$ entries of $\alpha^{\uparrow}$.  In fact, since the $r_i$'s and $\alpha^{\uparrow}$ are sorted in (weakly) increasing order, we know that this holds for the \emph{first} $r_i-1$ entries of $\alpha^{\uparrow}$. We want to show that this also holds when $a'_m = r_i$. 
        
        By our inductive hypothesis, we know that $r_i = 1 + \sum_{j=1}^{i-1} k_j$. Since there are exactly $r_i-1$ entries of $\alpha^{\uparrow}$ smaller than $r_i$, this means that $a'_{r_i} = r_i$. Since there are $k_i$ instances of $r_i$ in $\alpha^{\uparrow}$ and $\alpha^{\uparrow}$ is weakly increasing, we have that $a'_{r_i} = a'_{r_i+1} = \cdots = a'_{r_i+k_i-1}$. 
        But this means that we have $a'_m = a'_{m-1}$ for every $m \in [r_i+1, r_i + k_i]$, and our inductive case is done.
    
        \item (c)$\implies$(a): This is the backwards direction of \Cref{lemma: weakly increasing Fubini proof}.\qedhere
    
    \end{itemize}
\end{proof}

Bradt et al.\ state  in \cite[Observation 2.4]{rFubini} that in a Fubini ranking the set of cars with the same rank ultimately park in sequential order on the street. 
We restate this observation and provide a proof.

\begin{lemma}[Tied players park in sequential order]\label{lemma: competitor outcome for ties}
    Let $\alpha\in\FR{n}$ with $\psi(\alpha)=(B_1,B_2,\ldots,B_t)$.
    Then for each $i\in[t]$ the competitors in block $B_i$ appear in lexicographic order in the parking outcome $\pi = \out_{\FR{n}}(\alpha)$, in the spots numbered \[1+\sum_{j=1}^{i-1}|B_j|,~2+\sum_{j=1}^{i-1}|B_j|,~\ldots,~ \sum_{j=1}^{i}|B_j|. \]
\end{lemma}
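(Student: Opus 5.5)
We prove the statement by strong induction on the block index $i$, tracking the exact set of occupied spots after each car has parked. Write $s_i=\sum_{j=1}^{i}|B_j|$, so $s_0=0$. By the Ranks Lemma (\Cref{lemma: ranks determined by number of tied competitors}), every car in $B_i$ has preference $r_i=1+s_{i-1}$. The claim is that the cars of $B_i$ occupy exactly the interval of spots $I_i=[s_{i-1}+1,s_i]$, and that they fill it from left to right in the order in which they arrive. Since cars arrive in increasing index order and $B_i$ is written in increasing order, this gives the lexicographic order asserted.

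The key invariant concerns any moment of the parking process. At that moment, for each block $B_j$, suppose $m_j$ of its cars have parked. We claim they occupy exactly the spots $s_{j-1}+1,\ldots,s_{j-1}+m_j$, in arrival order.

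We verify the invariant by induction on the number of cars parked; it holds trivially at the start. Suppose the next car $c$ belongs to $B_i$, and let $m_i$ cars of $B_i$ already be parked. By the invariant, spots $s_{i-1}+1,\ldots,s_{i-1}+m_i$ are occupied.

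Since $m_i<|B_i|$, the spot $s_{i-1}+m_i+1$ lies in $I_i$. It is not occupied by a car of $B_i$, by the invariant for $B_i$. It is also not occupied by a car of any other block $B_j$, because by the invariant those cars lie inside $I_j$, and the intervals $I_j$ are disjoint.

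Car $c$ prefers spot $s_{i-1}+1$. If $m_i=0$, that spot is free, so $c$ parks there. Otherwise $c$ finds spots $s_{i-1}+1,\ldots,s_{i-1}+m_i$ occupied and takes the first free spot after them, which is $s_{i-1}+m_i+1$. In both cases $c$ parks in spot $s_{i-1}+m_i+1$, and the invariant is preserved.

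Once all cars have parked, each block $B_i$ fills $I_i$ entirely, in increasing order of car index. This gives the spot numbers claimed in the statement and the lexicographic order of each block within $\pi=\out_{\FR{n}}(\alpha)$.

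The main obstacle is the interleaving of cars from different blocks in the arrival queue: a car of a later block may arrive before all cars of an earlier block. The invariant is designed to handle this, since it never assumes that earlier blocks are complete. Its crucial ingredient is the disjointness of the intervals $I_j$, which is where the Ranks Lemma is essential: it guarantees that $B_i$'s preferred spot $r_i=s_{i-1}+1$ sits exactly at the left end of $I_i$, and therefore that a car of $B_i$ never spills into another block's interval.
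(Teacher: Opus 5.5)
Your proof is correct. It differs from the paper's mainly in what the induction runs over.

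The paper inducts on the block index. It places $B_1$ in spots $1,\dots,|B_1|$. It then uses the hypothesis that $B_1,\dots,B_i$ fill spots $1,\dots,s_i$ to conclude that the cars of $B_{i+1}$, all preferring $s_i+1$, fill the next $|B_{i+1}|$ spots in index order. Written that way, it reads as if the blocks park one after another, and it never addresses the interleaving you point out. For instance, in $(3,1,1)$ car $1$ belongs to the later block but parks first. The paper's argument is still sound, because a car never parks to the left of its preference, so cars of $B_j$ with $j>i+1$ can never enter $[s_i+1,s_{i+1}]$; the paper simply leaves this implicit.

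Your induction on the number of parked cars maintains an invariant for all blocks at once. It handles arbitrary arrival orders directly, and its only inputs are the Ranks Lemma and the disjointness of the intervals $I_j$. The price is a slightly heavier setup than the paper's shorter block-by-block description.

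One small fix: your opening sentence announces strong induction on the block index, but the argument you actually give is induction on the number of cars parked. The first sentence should say that.
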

\begin{proof}
    Recall that if $\alpha=(a_1,a_2,\ldots,a_n)\in\FR{n}$ has ranks $1=r_1<r_2<\cdots<r_t$, where by \Cref{prop:equivalent characterizations of Fubini} $r_i=1+\sum_{j=1}^{i-1}|B_j|$ for each $i\in[t]$, then the corresponding ordered set partition is
    \[\psi(\alpha)=(B_1,B_2,\ldots,B_t),\]
    where $B_m=\{i:a_i=r_m\}$ for each $m\in[t]$.
    
    First consider $B_1=\{b_1,b_2,\ldots,b_{|B_1|}\}$
    (where we assume $b_1<b_2<\cdots<b_{|B_1|}$)
    which is the set of all competitors in $\alpha$ that placed first in the competition. 
    Since we order the entries in $B_1$ in increasing order,  the smallest entry in $B_1$ - namely, $b_1$ - parks in spot $1$, the next entry ($b_2$) parks in spot $2$, and so on until the largest entry in $B_1$ ($b_{|B_1|}$) parks in spot $|B_1|$.
    Therefore, the competitors in $B_1$ appear in lexicographic order in the spots numbered
    \[1 + \sum_{j=1}^0 |B_j|,~2 + \sum_{j=1}^0 |B_j|,~\ldots,~\sum_{j=1}^1 |B_j|, \]
    where we set $\sum_{j=1}^0|B_j|=0$ (an empty sum).

    By the definition of a Fubini ranking, the next rank is equal to $r_2=1+|B_1|$. 
    Thus all of the entries in $B_2=\{c_1,c_2,\ldots,c_{|B_2|}\}$, where we assume $c_1<c_2<\cdots<c_{|B_2|}$,  have  preference $r_2$ and 
car $c_1$ parks in spot $r_2$, car $c_2$ parks in spot $r_2+1$, and so on until competitor $c_{|B_2|}$ parks in spot $|B_1|+|B_2|$.
Therefore, the competitors in $B_2$ appear in lexicographic order in the spots numbered 
    \[1 + \sum_{j=1}^1 |B_j|,~2 + \sum_{j=1}^1 |B_j|,~\ldots,~|B_2| + \sum_{j=1}^{1}|B_j|, 
    \]
    and note $|B_2| + \sum_{j=1}^{1}|B_j|= \sum_{j=1}^2 |B_j|$.

By induction, assume that for all $k\leq i<t$ the cars in block $B_k$ park in spots 
\[1 + \sum_{j=1}^{k-1} |B_j|,~2 + \sum_{j=1}^{k-1} |B_j|,~\ldots, \sum_{j=1}^{k}|B_j|.\]

Now consider block $B_{i+1}$. By definition of a Fubini ranking, the competitors in $B_{i+1}=\{d_1,d_2,\ldots,d_{|B_{i+1}|}\}$, where we assume $d_1<d_2<\cdots<d_{|B_{i+1}|}$,   tie at rank 
$r_{i+1}=1+\sum_{j=1}^{i}|B_j|$. 
By the induction hypothesis, we know the cars in blocks $B_1,B_2,\ldots,B_i$ park in spots 
\[1,2,3,\ldots,\sum_{j=1}^{i}|B_j|.\]
Thus, the first car in $B_{i+1}$, which is $d_1$, parks in spot $1+\sum_{j=1}^{i}|B_j|$, the second car, $d_2$, parks in  $2+\sum_{j=1}^{i}|B_j|$, and so on until the last car $d_{|B_{i+1}|}$ parks in spot $\sum_{j=1}^{i+1}|B_j|$.
Therefore, the competitors in $B_{i+1}$ appear in lexicographic order in the spots numbered 
    \[1 + \sum_{j=1}^{i} |B_j|,~2 + \sum_{j=1}^{i} |B_j|,~\ldots,~\sum_{j=1}^{i+1}|B_j|,
    \]
as claimed. 
\end{proof}
 
Recall that if $\pi=\pi_1\pi_2\cdots\pi_n\in\Sym_n$, and a parking function has parking outcome $\pi$, then $\pi_i$ is the car parked in spot $i$ for each $i\in[n]$.
Given that $\pi$ is a permutation, its inverse $\pi^{-1}=\pi_1^{-1}\pi_2^{-1}\cdots\pi_n^{-1}$ has the following property: for each $j\in[n]$, $\pi_j^{-1}$ is the spot that car $j$ parked in.
For example, If $\pi=4213$, then car $\pi_1=4$ parked in spot 1, car $\pi_2=2$ parked in spot 2, car $\pi_3=1$ parked in spot 3, and car $\pi_4=3$ parked in spot 4. 
Since $\pi^{-1}=3241$, 
$\pi_1^{-1}=3$ tells us that car 1 parked in spot 3,
$\pi_2^{-1}=2$ tells us that car 2 parked in spot 2,
$\pi_3^{-1}=4$ tells us that car 3 parked in spot 4, and 
$\pi_4^{-1}=1$ tells us that car 4 parked in spot 1. 
In this way, for any permutation $\pi \in \Sym_n$, $\out(\pi)=\pi^{-1}$, see \cite[Theorem 2.2]{MVP_PFs} for a formal proof of this fact.
Our next result shows that for any two competitors earning different ranks, the one earning a smaller rank parks to the left of the one earning a larger rank. 

\begin{lemma}\label{lem:lemma with lots of parts}
    Let $\alpha=(a_1,a_2,\ldots,a_n)\in\FR{n}$ have parking outcome $\out_{\FR{n}}(\alpha)=\pi$. 
    Then, for any two competitors $i $ and $j$, if $a_{i} < a_{j}$, then $\pi_{i}^{-1} < \pi_{j}^{-1}$.
\end{lemma}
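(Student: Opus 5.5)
The plan is to read everything off \Cref{lemma: competitor outcome for ties}, which already says exactly where each block of tied competitors parks. Write $\psi(\alpha)=(B_1,B_2,\ldots,B_t)$ as in \Cref{lem:bijection between Fubini rankings and ordered set partitions}, with distinct ranks $r_1<r_2<\cdots<r_t$. Suppose $a_i<a_j$. Then $a_i=r_p$ and $a_j=r_q$ for some $p,q\in[t]$, and since the ranks are listed in strictly increasing order, $p<q$. So competitor $i$ lies in $B_p$, competitor $j$ lies in $B_q$, and they belong to distinct blocks.

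Next I will locate the spots. By \Cref{lemma: competitor outcome for ties}, competitor $i$ parks in one of the spots
\[1+\sum_{m=1}^{p-1}|B_m|,\ \ldots,\ \sum_{m=1}^{p}|B_m|.\]
Hence $\pi_i^{-1}\le \sum_{m=1}^{p}|B_m|$. Likewise, competitor $j$ parks in a spot at least $1+\sum_{m=1}^{q-1}|B_m|$.

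Because $q-1\ge p$ and every block is nonempty, we have
\[\sum_{m=1}^{q-1}|B_m|\ \ge\ \sum_{m=1}^{p}|B_m|.\]
Therefore
\[\pi_i^{-1}\le \sum_{m=1}^{p}|B_m| < 1+\sum_{m=1}^{q-1}|B_m| \le \pi_j^{-1}.\]

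Throughout I use the fact stated just before the lemma that $\pi_k^{-1}$ is the spot where car $k$ parks. No real obstacle is expected. The only care needed is to check that the strict inequality $a_i<a_j$ forces $p<q$ (rather than merely $p\le q$), and that the interval of spots for $B_p$ lies entirely to the left of the interval for $B_q$. Both follow immediately from the ordering of the ranks and the explicit spot ranges in \Cref{lemma: competitor outcome for ties}.
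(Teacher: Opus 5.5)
Your proof is correct and follows essentially the same route as the paper: both place competitors $i$ and $j$ in the spot intervals given by \Cref{lemma: competitor outcome for ties} and chain the resulting inequalities. The only difference is notational: you write the interval endpoints as partial sums of block sizes, while the paper writes them as ranks $r_x$ and $r_{x+1}-1$ (equivalent by \Cref{lemma: ranks determined by number of tied competitors}), and your version also avoids the paper's superfluous upper bound $\pi_j^{-1}\le r_{y+1}-1$, which is undefined when $a_j$ is the largest rank.
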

\begin{proof}
        Let $r_1<r_2<\cdots<r_t$ denote the ranks of $\alpha$, and assume $a_{i} = r_{x}$ and  $a_{j} = r_{y}$, for some $r_{x},r_{y}\in\{r_1,r_2,\ldots,r_t\}$. 
        Since we are assuming $a_{i} < a_{j}$, we have that $r_{x} < r_{y}$. 
        Moreover, the ranks are sorted from smallest to largest, so we have $r_{y} \geq r_{x+1} > r_{x+1}-1$. 
        By \Cref{lemma: competitor outcome for ties}, we have that $j$ parked in a spot between $r_{y}$ and $r_{y+1}-1$, and $i$ parked in a spot between $r_{x}$ and $r_{x+1}-1$. 
        Since $\pi_{i}^{-1}$ denotes the spot occupied by $i$ and $\pi_{j}^{-1}$ denotes the spot occupied by $j$, we can use the previous inequalities to  get 
        \[r_{x} \leq \pi_{i}^{-1} \leq r_{x+1}-1 < r_{x+1} \leq r_{y} \leq \pi_{j}^{-1} \leq r_{y+1}-1,\]
        which completes the proof.
\end{proof}

\begin{remark}
\Cref{lem:lemma with lots of parts} is not true for parking functions in general. For example, consider the parking function $\alpha=(1,1,1,4,2)$, which is not a Fubini ranking. 
If $i=5$ and $j=4$, then $a_{i}=a_5=2<4=a_{4}=a_{j}$. However,  $\out(\pi)=12345$ and $\pi_{i}^{-1}=\pi_5^{-1}=5  \nless 4=\pi_4^{-1}=\pi_{j}^{-1}$.
\end{remark}

Recall that \Cref{lem:bijection between Fubini rankings and ordered set partitions} gives a bijection between Fubini rankings and ordered set partitions. Next we show that the parking outcome of a Fubini ranking is precisely the ordered set partition it corresponds to after removing braces, commas, and parenthesis. 
To make this precise, we recall the Mathematica function $\Flatten$ (see \cite{flatten}), which flattens out nested lists. For example, $\Flatten((\{3,4,7\},\{1\},\{2,5,6\}))=3471256$.
    
\begin{theorem}\label{thm:ordered set partition is the outcome}
    If $\alpha\in\FR{n}$, then 
    $\out_{\FR{n}}(\alpha)=\Flatten(\psi(\alpha))$.
\end{theorem}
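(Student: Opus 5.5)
The plan is to read the theorem off Lemma~\ref{lemma: competitor outcome for ties} (tied players park in sequential order), since that lemma already pins down, for every car, the exact spot it occupies.

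Let $\alpha\in\FR{n}$ with $\psi(\alpha)=(B_1,B_2,\ldots,B_t)$. For each $i\in[t]$ write $B_i=\{b^{(i)}_1<b^{(i)}_2<\cdots<b^{(i)}_{|B_i|}\}$ and set $s_i=\sum_{j=1}^{i}|B_j|$, with $s_0=0$. By the definition of $\Flatten$, the word $\Flatten(\psi(\alpha))$ lists the elements of $B_1$ in increasing order, then those of $B_2$ in increasing order, and so on. Hence its entry in position $s_{i-1}+m$ is $b^{(i)}_m$, for $1\le m\le |B_i|$.

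The intervals $\{s_{i-1}+1,\ldots,s_i\}$ for $i\in[t]$ partition $[n]$, because the blocks partition $[n]$ and so $s_t=n$. Therefore it suffices to show that for each $i$ and $m$, car $b^{(i)}_m$ parks in spot $s_{i-1}+m$. This is exactly what Lemma~\ref{lemma: competitor outcome for ties} says. The competitors in $B_i$ occupy the spots $s_{i-1}+1,\ldots,s_i$, and they appear in lexicographic (increasing) order there, so the $m$-th smallest element of $B_i$ is in spot $s_{i-1}+m$. Thus $\pi_{s_{i-1}+m}=b^{(i)}_m$ for all $i$ and $m$, which gives $\out_{\FR{n}}(\alpha)=\Flatten(\psi(\alpha))$ entrywise.

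There is no real obstacle beyond bookkeeping. The main care point is to make precise that ``appear in lexicographic order in those spots'' means the $m$-th smallest element sits in the $m$-th spot of the interval. If this seems ambiguous, I would re-derive it directly from the parking process. All cars in $B_i$ prefer $r_i=1+s_{i-1}$ (by Lemma~\ref{lemma: ranks determined by number of tied competitors}), and cars arrive in index order. By Lemma~\ref{lem:lemma with lots of parts}, cars of smaller rank never occupy spots beyond $s_{i-1}$, and cars of larger rank never occupy spots at or before $s_i$. So among the cars of $B_i$, the earlier-arriving ones (smaller labels) take the leftmost free spots of the interval.
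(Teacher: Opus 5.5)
Your proof is correct and uses the same idea as the paper: both arguments read the result directly off Lemma~\ref{lemma: competitor outcome for ties}, which puts the $m$-th smallest element of $B_i$ in spot $\sum_{j=1}^{i-1}|B_j|+m$, exactly its position in $\Flatten(\psi(\alpha))$. The paper wraps this in an induction on the spot index, splitting into cases according to whether the next letter of the flattened word stays in the same block or starts the next one; your direct position-by-position matching reaches the same conclusion with less bookkeeping.
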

\begin{proof}
    Let $\alpha\in\FR{n}$ and let $\psi(\alpha)=(B_1,B_2,\ldots,B_t)$ be its corresponding ordered set partition, where we order the entries in each block in increasing order. 
    We also recall that each block of competitors corresponds to a rank in $\alpha$, and the blocks are sorted by increasing rank. Namely, if competitor $b_i$ is in block $B_i$, competitor $b_j$ is in block $B_j$, and $i < j$, then $a_{b_i} < a_{b_j}$ i.e., competitor $b_i$ ranks \textit{before} competitor $b_j$. 
    For each $i\in[t]$, label the elements in each block as $B_i=\{b_{(i,1)},b_{(i,2)},\ldots,b_{(i,|B_i|)}\}$, where we assume that $b_{(i,j)}<b_{(i,j+1)}$ for all $j\in[|B_i|-1]$. 
    Create the word $w=w_1w_2\cdots w_n=\Flatten(\psi(\alpha))$.

    Let
    $\out_{\FR{n}}(\alpha)=\pi=\pi_1\pi_2\cdots\pi_n\in\Sym_n$. 
    For each $i\in [n]$, the value $\pi_i$ is the label of the car parked in spot~$i$.
    We want to show that 
    $\pi_i=w_i$, for all $i\in[n]$.

    We proceed by induction on the index $i\in[n]$. 
    To start, recall that by \Cref{lemma: competitor outcome for ties} 
    the cars in block 
    $B_1=\{b_{(1,1)},b_{(1,2)},\ldots,b_{(1,|B_1|)}\}$ park in spots $1,2,\ldots,|B_1|$.
    Thus, $\pi_i=b_{(1,i)}$ for each $i\in[|B_1|]$. 
    Moreover, by construction of $w$, we have $w_i=b_{(1,i)}$ for each $i\in[|B_1|]$. By transitivity, $\pi_i=w_i$ for all $i\in[|B_1|]$.
    This establishes the base case of the induction on the first $|B_1|$ cars.
    
    Assume for induction that $\pi_i=w_i$ for all $i< n$.
    Now consider $\pi_{i+1}$ and $w_{i+1}$.
    Given $w_i$, we know there exists $j\in[t]$ and $s\in[|B_j|]$ such that 
    $w_{i}=b_{(j,s)}$, i.e., $w_i$ is the $s$-th entry in block $B_j$. 
    Then there are two possibilities for what $w_{i+1}$ could be:
    \begin{itemize}
        \item $w_{i+1}=b_{(j,s+1)}$ if $w_i$ was \textit{not} the last entry of a block, and 
        \item $w_{i+1}=b_{(j+1,1)}$ if $w_i$ was the last entry of a block.
    \end{itemize}
    In the case when $w_{i+1}=b_{(j,s+1)}$, we know by \Cref{lemma: competitor outcome for ties} that car $b_{(j,s)}$ parks in spot 
    $y=s+\sum_{x=1}^{j-1}|B_x|$,
    and so car $b_{(j,s+1)}$ parks in spot 
    $y+1=s+1+\sum_{x=1}^{j-1}|B_x|$.
    Hence $\pi_y=b_{(j,s)}$ and $\pi_{y+1}=b_{(j,s+1)}$.
    
    Now recall that, by assumption, we have $w_i=b_{(j,s)}$, which means that $i=s+\sum_{x=1}^{j-1}|B_x|$. Thus, $i=y$. 
    So by transitivity,  this establishes that 
    $\pi_{i+1}=w_{i+1}$, as desired.
    
    In the case when $w_{i+1}=b_{(j+1,1)}$, we know that $w_{i+1}$ is the first entry in block $B_{j+1}$ and, by \Cref{lemma: competitor outcome for ties}, car $b_{(j+1,1)}$ parks in spot 
    $z=1+\sum_{x=1}^j|B_x|$.
    Thus $\pi_{z}=b_{(j+1,1)}$.
    Now recall that by assumption $w_{i+1}=b_{(j+1,1)}$, which means that $i+1=1+\sum_{x=1}^{j}|B_x|$. Thus, $i+1=z$.  
    So by transitivity, this yields 
    $\pi_{i+1}=w_{i+1}$, which completes the proof.
\end{proof}
\begin{remark}
    \Cref{thm:ordered set partition is the outcome} is not true for parking functions in general. As an example, consider $\alpha=(1,2,1)\in\PF{3}$ and note that $\alpha\notin\FR{3}$. 
    Observe that $\out(\alpha)=123$ and if we extend the domain of the function $\psi$ to parking functions, the ordered set partition corresponding to $\alpha$ is $\psi(\alpha)=(\{1,3\},\{2\})$. Therefore $\Flatten(\psi(\alpha))=132\neq123= \out(\alpha)$.
\end{remark}

Next, for a fixed $\pi \in \Sym_n$, we describe a bijection between the subset of ordered set partitions whose ``flattening'' is $\pi$ and the set of Fubini rankings whose parking outcome is $\pi$.
To state this result, for a fixed $\pi\in\Sym_{n}$, we define 
\[\Flatten^{-1}(\pi)=\{B\in\mathcal{OP}_n:\Flatten(B)=\pi \}.\]
\begin{theorem}\label{thm: OSP preimage = flatten preimage}
    If $\pi\in\Sym_n$, then 
    $\psi(\out_{\FR{n}}^{-1}(\pi))=\Flatten^{-1}(\pi)$.
\end{theorem}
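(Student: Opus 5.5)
We prove the set equality $\psi(\out_{\FR{n}}^{-1}(\pi))=\Flatten^{-1}(\pi)$ by double inclusion. Both inclusions use only \Cref{thm:ordered set partition is the outcome} and the fact that $\psi$ is a bijection from $\FR{n}$ onto $\mathcal{OP}_n$ (\Cref{lem:bijection between Fubini rankings and ordered set partitions}).

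For the inclusion $\subseteq$, take $B\in\psi(\out_{\FR{n}}^{-1}(\pi))$. Then $B=\psi(\alpha)$ for some $\alpha\in\FR{n}$ with $\out_{\FR{n}}(\alpha)=\pi$. By \Cref{thm:ordered set partition is the outcome}, $\Flatten(B)=\Flatten(\psi(\alpha))=\out_{\FR{n}}(\alpha)=\pi$. Since $B$ is an ordered set partition of $[n]$, this gives $B\in\Flatten^{-1}(\pi)$.

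For the inclusion $\supseteq$, take $B\in\Flatten^{-1}(\pi)$, so $B\in\mathcal{OP}_n$ and $\Flatten(B)=\pi$. Surjectivity of $\psi$ gives some $\alpha\in\FR{n}$ with $\psi(\alpha)=B$. Applying \Cref{thm:ordered set partition is the outcome} again, $\out_{\FR{n}}(\alpha)=\Flatten(\psi(\alpha))=\Flatten(B)=\pi$. Hence $\alpha\in\out_{\FR{n}}^{-1}(\pi)$ and $B=\psi(\alpha)\in\psi(\out_{\FR{n}}^{-1}(\pi))$.

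The only step that needs care is surjectivity of $\psi$. If we want to make it explicit rather than quote \cite{barreto2025restrictedfubinirankingsrestricted}, we can construct the preimage directly. Given $B=(B_1,\ldots,B_t)$, assign every competitor in $B_i$ the rank $1+\sum_{j<i}|B_j|$. This tuple satisfies condition (b) of \Cref{prop:equivalent characterizations of Fubini}, so it is a Fubini ranking, and $\psi$ sends it back to $B$ because the ranks increase with $i$.

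Since $\psi$ is injective, it restricts to a bijection between $\out_{\FR{n}}^{-1}(\pi)$ and $\Flatten^{-1}(\pi)$. In particular $|\out_{\FR{n}}^{-1}(\pi)|=|\Flatten^{-1}(\pi)|$, which is the form needed for the counting results that follow.
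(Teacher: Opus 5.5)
Your proof is correct and follows the paper's own argument: a double inclusion in which $\subseteq$ comes directly from \Cref{thm:ordered set partition is the outcome}, and $\supseteq$ uses bijectivity of $\psi$ to pick a preimage $\alpha$ and then applies the same theorem. Your explicit preimage construction through condition (b) of \Cref{prop:equivalent characterizations of Fubini} is valid, and it is simply extra detail beyond the paper, which takes surjectivity from the cited bijection.
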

\begin{proof}
    Fix $\pi\in\Sym_n$. We establish both set containments.

    \noindent $(\subseteq)$ Let $\alpha\in\FR{n}$ be arbitrary  with $\out_{\FR{n}}(\alpha)=\pi$. 
    Then $\psi(\alpha)\in\mathcal{OP}_n$ and
    by \Cref{thm:ordered set partition is the outcome}, $\Flatten(\psi(\alpha))=\out_{\FR{n}}(\alpha)=\pi$.
    Thus $\psi(\alpha)\in\Flatten^{-1}(\pi)$, as desired.

    \noindent $(\supseteq)$ Let $\mathcal{B}\in\mathcal{OP}_n$ be arbitrary with $\Flatten(\mathcal{B})=\pi$. Since $\psi$ is a bijection, there exists a unique $\alpha\in\FR{n}$ with $\psi(\alpha)=\mathcal{B}$. By \Cref{thm:ordered set partition is the outcome}, $\Flatten(\mathcal{B})=\Flatten(\psi(\alpha))=\out_{\FR{n}}(\alpha)=\pi$. Thus $\mathcal{B}\in\psi(\out_{\FR{n}}^{-1}(\pi))$, which completes the proof.
\end{proof}
Next, we let $\FR{n}^\uparrow$ denote the set of weakly increasing Fubini rankings and show that this set is equivalent to the set of Fubini rankings whose parking outcome is the identity permutation $123\cdots n$.
\begin{lemma}\label{lem:weakly increasing Fubinis park in identity order}
    For all $n\in\mathbb{N}$, $\out^{-1}_{\FR{n}}(123\cdots n)=\FR{n}^\uparrow$ and $|\out^{-1}_{\FR{n}}(123\cdots n)|=|\FR{n}^\uparrow|=2^{n-1}$.
\end{lemma}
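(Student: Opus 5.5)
We prove the two claims separately: first the set equality, then the count via compositions of $n$.

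\emph{Set equality.} For $(\supseteq)$, let $\alpha\in\FR{n}^\uparrow$ with $\psi(\alpha)=(B_1,\ldots,B_t)$. Since $\alpha$ is weakly increasing and the blocks are ordered by increasing rank, $B_1$ consists of the first $|B_1|$ indices, $B_2$ of the next $|B_2|$ indices, and so on. In other words, each $B_j$ is the interval
\[\Bigl\{1+\sum_{m<j}|B_m|,\ldots,\sum_{m\le j}|B_m|\Bigr\}.\]
Then \Cref{thm:ordered set partition is the outcome} gives $\out_{\FR{n}}(\alpha)=\Flatten(\psi(\alpha))=12\cdots n$.

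For $(\subseteq)$, suppose $\out_{\FR{n}}(\alpha)=12\cdots n$, so $\pi^{-1}$ is the identity and car $i$ parks in spot $i$. Take $i<j$ and suppose $a_i>a_j$. Then \Cref{lem:lemma with lots of parts} gives $\pi_j^{-1}<\pi_i^{-1}$, that is, $j<i$, a contradiction. Hence $a_i\le a_j$ whenever $i<j$, so $\alpha$ is weakly increasing and $\alpha\in\FR{n}^\uparrow$.

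\emph{Counting.} By \Cref{thm: OSP preimage = flatten preimage} and the bijectivity of $\psi$, $|\out^{-1}_{\FR{n}}(12\cdots n)|=|\Flatten^{-1}(12\cdots n)|$. An ordered set partition flattens to $12\cdots n$ exactly when its blocks, read in order and each written increasingly, concatenate to $12\cdots n$. Such a partition is therefore determined by where the word $12\cdots n$ is cut into consecutive nonempty segments. Every choice of cut positions among the $n-1$ gaps gives a valid ordered set partition, since the segments are increasing and partition $[n]$. Distinct choices give distinct partitions. Hence $|\Flatten^{-1}(12\cdots n)|=2^{n-1}$.

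The same count can be read off \Cref{prop:equivalent characterizations of Fubini}(c): a weakly increasing $\alpha$ is determined by choosing, for each $i\ge2$, whether $a_i=i$ or $a_i=a_{i-1}$. These two options are distinct because $a_{i-1}\le i-1<i$, which again gives $2^{n-1}$. I expect no real obstacle here; the only point needing care is checking that every composition actually yields a valid Fubini ranking, and this is exactly what the bijection $\psi$, together with \Cref{thm: OSP preimage = flatten preimage}, provides.
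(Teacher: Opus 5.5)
Your proof is correct and essentially the paper's: the $(\supseteq)$ direction and your second count via \Cref{prop:equivalent characterizations of Fubini}(c) match it, and you add the distinctness check $a_{i-1}\le i-1<i$ that the paper leaves implicit. The only variations are that for $(\subseteq)$ you cite \Cref{lem:lemma with lots of parts} instead of arguing with the blocks of $\psi(\alpha)$ and \Cref{thm: OSP preimage = flatten preimage}, and your primary count by cutting $12\cdots n$ into segments is the $k=1$ case of the argument the paper later gives for \Cref{corr: count of flat(pi) is 2^n-k}.
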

\begin{proof}
    Fix $n\in\NN$. We establish both set containments.
    
    \noindent $(\subseteq)$ Consider $\alpha=(a_1,a_2,\ldots,a_n)\in\out_{\FR{n}}^{-1}(123\cdots n)$. 
    For contradiction, 
    assume there exists an index $i\in[n-1]$ such that $a_i>a_{i+1}$ (so that $\alpha$ is not weakly increasing). 
    Then, in the corresponding ordered set partition $\psi(\alpha)$, there exist blocks $B_x$ and $B_y$, with $x<y$, such that $i+1\in B_x$ and $i\in B_y$. Then, by \Cref{thm: OSP preimage = flatten preimage}, the parking outcome of $\alpha$ will have $i+1$ appearing to the left of $i$, which contradicts $\out_{\FR{n}}(\alpha)=123\cdots n$. Thus $\alpha\in\FR{n}^\uparrow$.

    \noindent $(\supseteq)$ Consider $\alpha=(a_1,a_2,\ldots,a_n)\in \FR{n}^\uparrow$. We know that $a_1\leq a_2\leq \cdots\leq a_n$.
    Suppose that the ranks in $\alpha$ are $1=r_1<r_2<\cdots<r_t$.
    Then $\psi(\alpha)=(B_1,B_2,\ldots,B_t)$, 
    where $B_i = \{j: a_j = r_i\}$ for each $i\in[t]$.
   
    Since $\alpha$ is sorted in weakly increasing order, 
    any competitors tied at a rank would appear in consecutive entries in $\alpha$. 
    Hence, for each $i\in[t]$, 
    $B_i$ consists of a nonempty interval of consecutive integers in $[n]$ for any $i\in[t]$.
   
    Recall that by construction, we order the elements in a block $B_i$ in increasing order and, by definition of $\psi$, we order the blocks $B_i$ by their corresponding ranks. 
    Hence, the ordered set partition $(B_1,B_2,\ldots,B_t)$ lists all of the elements in $[n]$ in increasing order. 
    Thus 
    $\Flatten(\psi(\alpha))=123\cdots n$.
    By \Cref{thm:ordered set partition is the outcome}, $\Flatten(\psi(\alpha))=\out_{\FR{n}}(\alpha)=123\cdots n$. 
    This implies that $\alpha\in\out^{-1}_{\FR{n}}(123\cdots n)$.

For the cardinality result, let $\alpha=(a_1,a_2,\ldots,a_n)\in\FR{n}^{\uparrow}$. Recall that every Fubini ranking must contain a 1. Hence, by $\alpha$ being a weakly increasing Fubini ranking, $a_1=1$. Then, by \Cref{prop:equivalent characterizations of Fubini} part (c), for each $i\in[2,n]$ competitor $i$ with rank $r_i$ has two options: 
\begin{itemize}
    \item they tie with competitor $i-1$ (at a rank strictly less than $i$) so $a_i=a_{i-1}$, or
    \item they have rank equal to $i$, so $a_i=i$.
\end{itemize}

    This gives a total of $2^{n-1}$ weakly increasing Fubini rankings with $n$ competitors, as claimed.
\end{proof}

The next result shows that whenever two cars are in the same run in the parking outcome and have the same parking preference, every car parked between them has the same preference.

\begin{lemma}\label{lemma: equal ranks in runs}
    Let $\alpha=(a_1,a_2,\ldots,a_n)\in\FR{n}$ with parking outcome $\out_{\FR{n}}(\alpha)=\pi=\pi_1\pi_2\cdots\pi_n$ and  let $\pi=\tau_1\tau_2\cdots\tau_k$ where $\tau_1,\tau_2,\ldots,\tau_k$ are the runs of $\pi$. 
    If  $\pi_x,\pi_z\in\tau_i$, with $\pi_x < \pi_z$ for some $i\in[k]$, and $a_{\pi_x}=a_{\pi_z}$, then $a_{\pi_x}=a_{\pi_y}=a_{\pi_z}$ for all $\pi_y\in \tau_i$ with $\pi_x<\pi_y<\pi_z$. 
\end{lemma}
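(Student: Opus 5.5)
Within a run of $\pi$ the entries increase from left to right, so $\pi_x<\pi_y<\pi_z$ in $\tau_i$ means the positions satisfy $x<y<z$. The plan is to combine \Cref{thm:ordered set partition is the outcome} with \Cref{lemma: competitor outcome for ties}: the outcome $\pi$ is $\Flatten(\psi(\alpha))$, where $\psi(\alpha)=(B_1,\ldots,B_t)$, and each block occupies a contiguous interval of spots.

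First, suppose $a_{\pi_x}=a_{\pi_z}=r_m$. Then $\pi_x$ and $\pi_z$ both lie in the block $B_m$. By \Cref{lemma: competitor outcome for ties}, the cars of $B_m$ occupy exactly the consecutive spots
\[1+\sum_{j=1}^{m-1}|B_j|,\ \ldots,\ \sum_{j=1}^{m}|B_j|.\]
The spots $x$ and $z$, where $\pi_x$ and $\pi_z$ park, therefore lie in this interval. Since $x<y<z$, spot $y$ also lies in the interval, so the car $\pi_y$ parked there belongs to $B_m$. Hence $a_{\pi_y}=r_m=a_{\pi_x}=a_{\pi_z}$.

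Alternatively, one can argue by contradiction using \Cref{lem:lemma with lots of parts}. If $a_{\pi_y}\neq a_{\pi_x}$, then either $a_{\pi_y}<a_{\pi_x}$, forcing $y<x$, or $a_{\pi_y}>a_{\pi_z}$, forcing $y>z$. Both contradict $x<y<z$.

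The only delicate step is converting the value ordering $\pi_x<\pi_y<\pi_z$ into the position ordering $x<y<z$, and this follows from all three entries lying in the same increasing run $\tau_i$. The run hypothesis is used only there; the contiguity of blocks does the rest.
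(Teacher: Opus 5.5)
Your proof is correct and follows essentially the same route as the paper: both place $\pi_x,\pi_z$ in a common block $B_m$ and use \Cref{lemma: competitor outcome for ties} to conclude that every spot between them is filled by a car of $B_m$. The paper phrases this as a contradiction, whereas you argue directly and make explicit the passage from $\pi_x<\pi_y<\pi_z$ to $x<y<z$ via the increasing run, which the paper leaves implicit; your alternative argument through \Cref{lem:lemma with lots of parts} is also valid.
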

\begin{proof}
    Recall that if $\alpha=(a_1,a_2,\ldots,a_n)\in\FR{n}$ with ranks $1=r_1<r_2<\cdots<r_t$, where by \Cref{lemma: competitor outcome for ties}, $r_i=1+\sum_{j=1}^{i-1}|B_j|$ for each $i\in[t]$, then the corresponding ordered set partition is
    \[\psi(\alpha)=(B_1,B_2,\cdots,B_t),\]
    where $B_m=\{i:a_i=r_m\}$ for each $m\in[k]$.
    Assume that $\pi_x,\pi_z\in\tau_i$ for some $i\in[t]$ and that $a_{\pi_x}=a_{\pi_{z}}$.
    Furthermore, suppose that they tie at rank $r_m$, i.e., $a_{\pi_x}=a_{\pi_{z}}=r_m$.
    Then, by definition of the bijection $\psi$, we have that $\pi_x,\pi_z\in B_m$.
    
    By \Cref{lemma: competitor outcome for ties}, 
    $\pi_x$ and $\pi_z$ appear in lexicographic order in $\tau_i$ in two of the spots among those labeled
    \[1+\sum_{j=1}^{m-1}|B_j|, ~2+\sum_{j=1}^{m-1}|B_j|,~\ldots~, ~\sum_{j=1}^{m}|B_j|.\]
    Now suppose that $\pi_y\in \tau_i$ and $\pi_x <\pi_y < \pi_z$, and suppose for contradiction that $\pi_y\notin B_m$. 
Then there exists a block $B_{s}$ which contains $\pi_y$, and by \Cref{lemma: competitor outcome for ties}, $\pi_y$ must park in a spot among those labeled 
\[1+\sum_{j=1}^{s-1}|B_j|, ~2+\sum_{j=1}^{s-1}|B_j|,~\ldots~, ~\sum_{j=1}^{s}|B_j|.\]
Since $|B_j|>0$ for all $j$, this contradicts the assumption that $\pi_y$ parked strictly between $\pi_x$ and $\pi_z$.
Therefore $\pi_y\in B_m$, so by definition of the map $\psi$, $a_{\pi_y}=r_m=a_{\pi_x}=a_{\pi_z}$, as desired.
\end{proof}
 
The next result establishes that when cars are part of distinct runs in the parking outcome, they did not tie in the competition.  
\begin{lemma}\label{lemma: ties do not break over runs}
    Let $\alpha=(a_1,a_2,\ldots,a_n)\in\FR{n}$ with $\out_{\FR{n}}(\alpha)=\pi$ for some $\pi\in\mathfrak{S}_n$. Let $\pi=\tau_1\tau_2\cdots\tau_k$ and suppose $p_1\in\tau_i$ and $p_2\in\tau_j$ with $i<j$. Then $a_{p_1}\neq a_{p_2}$. 
\end{lemma}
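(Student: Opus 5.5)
We argue by contradiction, using \Cref{thm:ordered set partition is the outcome} and \Cref{lemma: competitor outcome for ties}. Suppose $p_1\in\tau_i$ and $p_2\in\tau_j$ with $i<j$, and $a_{p_1}=a_{p_2}=r_m$ for some rank $r_m$ of $\alpha$. By the definition of $\psi$, both $p_1$ and $p_2$ lie in the same block $B_m$ of $\psi(\alpha)=(B_1,B_2,\ldots,B_t)$. By \Cref{lemma: competitor outcome for ties}, the elements of $B_m$ occupy the contiguous spots
\[1+\sum_{s=1}^{m-1}|B_s|,~\ldots,~\sum_{s=1}^{m}|B_s|\]
in $\pi$, and they appear there in increasing order. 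So $B_m$ appears in $\pi$ as a contiguous increasing factor $\pi_u\pi_{u+1}\cdots\pi_v$ with $\pi_u<\pi_{u+1}<\cdots<\pi_v$.

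The key step is that a contiguous increasing factor of $\pi$ cannot meet two different runs. Runs are maximal contiguous increasing subwords, so two runs $\tau_i$ and $\tau_{i+1}$ are separated exactly at a descent: if $\pi_d$ is the last letter of $\tau_i$ and $\pi_{d+1}$ the first letter of $\tau_{i+1}$, then $\pi_d>\pi_{d+1}$. Since $p_1$ lies in an earlier run than $p_2$, its spot is smaller, and between their positions there is at least one boundary between consecutive runs. That boundary is some index $d$ with $u\le d<d+1\le v$ and $\pi_d>\pi_{d+1}$, which contradicts the factor $\pi_u\cdots\pi_v$ being increasing. Hence $a_{p_1}\neq a_{p_2}$.

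The only care needed is the bookkeeping. One must check that $p_1$ and $p_2$ sit at positions between $u$ and $v$ inclusive, which follows from the contiguity statement in \Cref{lemma: competitor outcome for ties}. One must also check that some run boundary lies strictly inside $[u,v]$, which follows because the runs partition the positions $1,\ldots,n$ into consecutive intervals. The last fact is immediate from the definition of runs, so there is no real obstacle.
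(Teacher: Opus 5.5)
Your proof is correct and follows essentially the same route as the paper: assume $a_{p_1}=a_{p_2}$, place both competitors in a single block of $\psi(\alpha)$, and use the fact that the cars in a block park in contiguous spots in increasing order to force $p_1$ and $p_2$ into the same run. The paper simply asserts that last implication, whereas you justify it by finding the descent at the end of $\tau_i$ inside the block's interval of spots.
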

\begin{proof} 
    Recall that if $\alpha=(a_1,a_2,\ldots,a_n)\in\FR{n}$ with ranks $1=r_1<r_2<\cdots<r_t$, where by \Cref{lemma: competitor outcome for ties} $r_i=1+\sum_{j=1}^{i-1}|B_j|$ for each $i\in[t]$, then the corresponding ordered set partition is
    \[\psi(\alpha)=(B_1,B_2,\ldots,B_t),\]
    where $B_m=\{i:a_i=r_m\}$ for each $m\in[t]$.
    Suppose for the sake of contradiction that $a_{p_1}=a_{p_2}=r_x$ for some $x\in [t]$, which implies that both $p_1,p_2\in B_x$. By \Cref{lemma: competitor outcome for ties}, the entries in $B_x$, and the entries $p_1$ and $p_2$ in particular, appear in $\pi$ in lexicographic order in spots labeled 
    \[1+\sum_{j=1}^{x-1}|B_j|, ~2 + \sum_{j=1}^{x-1}|B_j|, \ldots, \sum_{j=1}^{x}|B_j|. \]
    This implies that both $p_1$ and $p_2$ appear in the same run, which is a contradiction since we assumed that $p_1\in\tau_i$ and $p_2\in\tau_j$ with $i<j$. 
    Therefore, $a_{p_1}\neq a_{p_2}$.
\end{proof} 

Next, let $\pi \in \Sym_n$ and let $\alpha \in \FR{n}$ be \textit{any} Fubini ranking that satisfies $\out(\alpha)=\pi$. We show that if $\pi$ has a descent at index $i$, $\pi_{i}$ and $\pi_{i+1}$ are the last and first entries, respectively, of adjacent blocks of $\psi(\alpha)$.

\begin{lemma}\label{lem:descent force blocks}
Fix $\pi=\pi_1\pi_2\cdots\pi_n\in\Sym_n$ and let $\mathcal{B}=(B_1,B_2,\ldots,B_t)\in\Flatten^{-1}(\pi)$, where the entries in each block $B_i$ are listed in increasing order.
If $\pi_x>\pi_{x+1}$ for some index $x\in[n-1]$, then there exists $i\in[t-1]$ such that $\pi_x=\max(B_i)$ and $\pi_{x+1}=\min(B_{{i+1}})$.
\end{lemma}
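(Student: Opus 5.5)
Since $\mathcal{B}=(B_1,B_2,\ldots,B_t)\in\Flatten^{-1}(\pi)$, the word $\pi$ is the concatenation of the blocks $B_1,B_2,\ldots,B_t$, each written in increasing order. The plan is therefore to show that the positions $x$ and $x+1$ cannot lie in the same block, so they must straddle a boundary between two consecutive blocks.

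First, I will record the positions occupied by each block. Set $s_0=0$ and $s_i=\sum_{j=1}^{i}|B_j|$ for $i\in[t]$. Then the entries of $B_i$, listed in increasing order, occupy positions $s_{i-1}+1,\ldots,s_i$ of $\pi$. This is immediate from the definition of $\Flatten$, and it agrees with \Cref{lemma: competitor outcome for ties} when $\mathcal{B}=\psi(\alpha)$. The intervals $[s_{i-1}+1,s_i]$ partition $[n]$, so position $x$ lies in a unique interval, say the one indexed by $i$.

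Next, I split into cases according to whether $x<s_i$ or $x=s_i$. If $x<s_i$, then $x+1$ also lies in the interval for $B_i$. Because the entries of $B_i$ appear in increasing order, this gives $\pi_x<\pi_{x+1}$, contradicting the descent $\pi_x>\pi_{x+1}$. Hence $x=s_i$. Since $x\le n-1<n=s_t$, we get $i\le t-1$. Then $\pi_x$ is the last entry of $B_i$, which is $\max(B_i)$, and $\pi_{x+1}$ occupies position $s_i+1$, the first position of $B_{i+1}$, so $\pi_{x+1}=\min(B_{i+1})$.

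The only point needing care is the bookkeeping that identifies the last position of a block with its maximum and the first position with its minimum. This follows directly from the convention that elements within each block are written in increasing order, so I expect no real obstacle.
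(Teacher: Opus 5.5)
Your proof is correct and follows the same route as the paper: you rule out $\pi_x,\pi_{x+1}$ lying in the same block because each block is written in increasing order, and then conclude that they straddle the boundary between consecutive blocks $B_i,B_{i+1}$, so $\pi_x=\max(B_i)$ and $\pi_{x+1}=\min(B_{i+1})$. Your partial-sum bookkeeping works directly from the definition of $\Flatten$ and makes both the adjacency step and the bound $i\le t-1$ explicit, whereas the paper gets adjacency by appealing to \Cref{thm: OSP preimage = flatten preimage}.
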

\begin{proof}
    Assume $\pi_x>\pi_{x+1}$ with  $x\in[n-1]$. Assume for contradiction that $\pi_x$ and $\pi_{x+1}$ are in the same block $B_i$ of $\mathcal{B}$. Then, since the elements in $B_i$ are sorted in increasing order, $\pi_x$ would be to the right of $\pi_{x+1}$ in $\pi$. This contradicts the assumption that the entry $\pi_x$ is followed immediately by $\pi_{x+1}$ in $\pi$.
    Thus, $\pi_x$ and $\pi_{x+1}$ are in distinct blocks of $\mathcal{B}$. 
    
    By \Cref{thm: OSP preimage = flatten preimage}, since $\Flatten(\mathcal{B})=\out_{\FR{n}}(\alpha)=\pi=\pi_1\pi_2\cdots\pi_n$ and since $\pi_x$ and $\pi_{x+1}$ appear consecutively in $\pi$, we have that $\pi_x$ and $\pi_{x+1}$ are in adjacent blocks $B_i$ and $B_{i+1}$ of $\mathcal{B}$ (for some $i\in[t-1]$). 
    Then, since the elements in $B_i$ and $B_{i+1}$ are sorted in increasing order and $\pi_x$ and $\pi_{x+1}$ appear consecutively in $\pi$, we have that $\pi_x$ must be the last value (i.e., maximum) in $B_i$ and $\pi_{x+1}$ must be the first value (i.e., minimum) in $B_{i+1}$, as claimed.
\end{proof}

We now count the number of ordered set partitions that flatten to a fixed permutation.
\begin{corollary}\label{corr: count of flat(pi) is 2^n-k}
 If $\pi\in\Sym_n$ has $k$ runs, then $|\Flatten^{-1}(\pi)|=2^{n-k}$.
\end{corollary}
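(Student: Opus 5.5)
An ordered set partition $\mathcal{B}=(B_1,\ldots,B_t)$ with $\Flatten(\mathcal{B})=\pi$ is completely determined by the set of positions $x\in[n-1]$ at which one block ends and the next begins. We call these positions the \emph{cut points} of $\mathcal{B}$. Indeed, once the cut points are known, the blocks are exactly the maximal contiguous segments of $\pi$ between consecutive cuts. So I would prove the statement by showing that the map $\mathcal{B}\mapsto\{\text{cut points of }\mathcal{B}\}$ is a bijection from $\Flatten^{-1}(\pi)$ onto the family of subsets $S\subseteq[n-1]$ containing the descent set $D(\pi)=\{x:\pi_x>\pi_{x+1}\}$. Since $\pi$ has $k$ runs, $|D(\pi)|=k-1$, and such sets $S$ correspond to arbitrary subsets of the $(n-1)-(k-1)=n-k$ ascent positions. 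This gives $2^{n-k}$.

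The steps, in order, are as follows.

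First, injectivity. If $\Flatten(\mathcal{B})=\pi$ and each block is written in increasing order, then each block occupies a contiguous segment of $\pi$, and the blocks appear in order. Hence $\mathcal{B}$ is recovered from $\pi$ together with its cut points.

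Second, the image is contained in $\{S: S\supseteq D(\pi)\}$. This is exactly \Cref{lem:descent force blocks}: every descent position $x$ has $\pi_x=\max(B_i)$ and $\pi_{x+1}=\min(B_{i+1})$, so $x$ is a cut point.

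Third, surjectivity onto this family. Given $S\supseteq D(\pi)$, cut $\pi$ at the positions in $S$ into consecutive segments. Each segment contains no descent in its interior, so it is increasing. Taking the segments as blocks gives an ordered set partition of $[n]$ whose blocks, listed in increasing order, are exactly those segments, so it flattens to $\pi$. Different $S$ give different partitions.

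The only potentially delicate point is the third step. There one must check that the increasing-order convention for writing blocks does not reorder the elements within a segment, and this holds precisely because no descent lies inside a segment. Everything else is bookkeeping, and the count $2^{n-k}$ follows from choosing independently whether to cut at each of the $n-k$ ascents. Alternatively, one can phrase this as choosing a set composition of each run $\tau_j$ into contiguous pieces, giving $\prod_j 2^{\len(\tau_j)-1}=2^{n-k}$.
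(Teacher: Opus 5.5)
Your proposal is correct and follows essentially the same route as the paper. The paper's replacement of an arbitrary subset of the commas inside each run-block by ``$\},\{$'' is exactly your free choice of cut points among the $n-k$ ascent positions, and its converse direction likewise uses \Cref{lem:descent force blocks} to force a cut at every descent. Your cut-set formulation mainly makes explicit the injectivity that the paper leaves implicit, namely that distinct cut sets yield distinct ordered set partitions.
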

\begin{proof}
    Given $\pi\in\Sym_{n}$ with runs $\pi=\tau_1\tau_2\cdots\tau_k$, construct the ordered set partition $\mathcal{B}=(B_1,B_2,\ldots,B_k)$ where, for each $i\in[k]$, $B_i$ consists of the elements in $\tau_i$ ordered in increasing order.
    By \Cref{thm:ordered set partition is the outcome}, $\Flatten(\mathcal{B})=\pi$.
    
    First we show that partitioning a block, and keeping the relative order of the elements in that block, results in a ordered set partition whose flattened version is exactly $\pi$.
    Consider a block $B_i=\{\tau_{i,1},\tau_{i,2},\ldots,\tau_{i,|\len(\tau_i)|}\}$ where $\tau_{i,j}$ denotes the $j$-th entry in run $\tau_i$.
    Replacing any comma in $B_i$ with ``$\},\{$'' (while keeping the elements in the same order as in $B_i$)
    results in a new ordered collection of blocks we call $B_i'$.
    Replacing $B_i$ in $\mathcal{B}$ with the collection of ordered sets $B_i'$, yields an ordered set partition whose flattened parking outcome is $\pi$. 
    Furthermore, any subset of commas can be replaced with instances of ``$\},\{$'' and still yield an ordered set partition which can be flattened to $\pi$.
    Therefore, since each $B_i$ has $\len(\tau_i)-1$ commas, 
    we can construct $|\mathcal{P}([\len(\tau_i)-1])| = 2^{\len(\tau_i)-1}$ distinct ordered set partitions in $\Flatten^{-1}(\pi)$ by only changing $B_i$, where $\mathcal{P}(X)$ denotes the power set for a set $X$.
    Hence, for each run $\tau_i$ we can construct $2^{\len(\tau_i)-1}$ ordered set partitions and, taking the product over each $i\in[k]$, we get a total of 
    \[\prod_{i=1}^k2^{\len(\tau_i)-1}=2^{n-k}\]
    ordered set partitions that flatten to $\pi$. 
    Call this collection of constructed ordered set partitions $\mathfrak{B}\subseteq \Flatten^{-1}(\pi)$. Then $|\mathfrak{B}|=2^{n-k}$, and so
    \[|\Flatten^{-1}(\pi)|\geq 2^{n-k}.\]

    To show equality, we now establish that any ordered set partition that flattens to $\pi$ must have come from our construction described above, which means that it had to be in $\mathfrak{B}$ to begin with.
    
    Consider $\mathcal{A}=(A_1,A_2,\ldots,A_s)$ with  $\Flatten(\mathcal{A})=\pi$.
    The values in $[n]$ appear in 
    $\mathcal{A}$ exactly in the order they appear in $\pi$.
    Moreover, 
    by \Cref{lem:descent force blocks} every descent in $\pi$ forces the start of a new block in $\mathcal{A}$.
    By assumption,
    $\pi$ has $k$ runs, which means $\pi$ has $k-1$ descents.
    Hence, $\mathcal{A}$ has at least $k$ blocks: whenever $\pi_i>\pi_{i+1}$, a block ends on $\pi_i$ and a new block starts on $\pi_{i+1}$, while the last block ends on $\pi_n$.
    If these are the only blocks of $\mathcal{A}$, then $\mathcal{A}=(\tau_1,\tau_2,\ldots,\tau_k)$ and we are done.
    If $\mathcal{A}$ has more blocks, these blocks would further partition the blocks made up of the runs of $\pi$, which is precisely how we constructed the elements in the set $\mathfrak{B}$.
    Therefore $\mathcal{A}\in\mathfrak{B}$.
    This proves that 
    \[|\Flatten^{-1}(\pi)|=2^{n-k},\]
    where $k$ is the number of runs of $\pi$.
\end{proof}

We now define the restriction of a Fubini ranking with a given parking outcome $\pi$ to a run of $\pi$.
\begin{definition}\label{def:restrict alpha to a run}
    Fix $\pi\in\Sym_n$ and let $\tau=\tau_1\tau_2\cdots\tau_k$ be a run of $\pi$ with length $k$.
If $\alpha=(a_1,a_2,\ldots,a_n)\in\FR{n}$ such that $\out_{\FR{n}}(\alpha)=\pi$, then \textit{the restriction of $\alpha$ to the competitors in $\tau$}, is $\alpha|_{\tau}=(a_{\tau_1},a_{\tau_2},\ldots,a_{\tau_{k}})$.
Then the collection of all Fubini rankings with outcome $\pi$ restricted to the competitors in a run $\tau$ is 
\[\FR{\tau}=\{\alpha|_{\tau}:\alpha\in\out_{\FR{n}}^{-1}
(\pi)\}\subseteq[n]^k.\]
\end{definition}
For example, 
if $\pi=1324$ and $\tau=24$, then 
$\out_{\FR{4}}^{-1}(1324)=\{(1,3,1,3),(1,3,1,4),(1,3,2,3),(1,3,2,4)\}$,
and so $\FR{\tau}=\{(3,3),(3,4)\}$.
We now introduce a technical lemma that is  helpful in our next set of results.

\begin{lemma}\label{lem:save us}
Fix $\pi\in\Sym_n$ and let $\tau$ be a run of $\pi$. Then the sets $\FR{\tau}$ and $\FR{\len(\tau)}^{\uparrow}$ are in bijection.
\end{lemma}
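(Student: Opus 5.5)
Let $\tau=\tau_1\tau_2\cdots\tau_m$ be a run of $\pi$ with $m=\len(\tau)$, occupying spots $s+1,s+2,\ldots,s+m$ of the street, where $s$ is the total length of the runs preceding $\tau$. The natural bijection is a shift: I will define $\phi:\FR{\tau}\to\FR{m}^\uparrow$ by
\[\phi(a_{\tau_1},\ldots,a_{\tau_m})=(a_{\tau_1}-s,\ldots,a_{\tau_m}-s),\]
and show that it is well defined, injective, and surjective.

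\textbf{Well-definedness.} Take $\alpha\in\out^{-1}_{\FR{n}}(\pi)$ and set $\mathcal{B}=\psi(\alpha)$. By \Cref{thm: OSP preimage = flatten preimage} and the proof of \Cref{corr: count of flat(pi) is 2^n-k}, $\mathcal{B}$ is obtained by refining the blocks formed by the runs of $\pi$. Hence the competitors of $\tau$ make up a consecutive list of blocks $B_p,\ldots,B_q$ whose union is exactly the set of entries of $\tau$, and $\sum_{j<p}|B_j|=s$.

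\begin{itemize}
\item By the Ranks Lemma (\Cref{lemma: ranks determined by number of tied competitors}), the rank of block $B_{p+u}$ is $1+s+\sum_{j=p}^{p+u-1}|B_j|$.
\item After subtracting $s$, the ranks are therefore $1+\sum_{j=0}^{u-1}|B_{p+j}|$. These blocks are consecutive intervals of positions in the run, and the run is listed in increasing order.
\item So the shifted tuple, read in the order $\tau_1,\ldots,\tau_m$, is weakly increasing and satisfies condition (b) of \Cref{prop:equivalent characterizations of Fubini}.
\end{itemize}

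Thus $\phi(\alpha|_\tau)\in\FR{m}^\uparrow$.

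\textbf{Injectivity.} This is immediate, since $\phi$ is a translation by the constant $s$.

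\textbf{Surjectivity.} Given $\beta\in\FR{m}^\uparrow$:

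\begin{itemize}
\item By \Cref{lem:weakly increasing Fubinis park in identity order}, $\psi(\beta)$ is an ordered set partition of $[m]$ into consecutive intervals. Transport it to a refinement of the run $\tau$ by replacing each $i$ with $\tau_i$.
\item Keep every other run of $\pi$ as a single block, for instance. This gives an ordered set partition $\mathcal{A}$ with $\Flatten(\mathcal{A})=\pi$.
\item By \Cref{thm: OSP preimage = flatten preimage}, $\mathcal{A}=\psi(\alpha)$ for some $\alpha\in\out^{-1}_{\FR{n}}(\pi)$.
\item Applying the Ranks Lemma again shows $\alpha|_\tau=\beta+s$, so $\phi(\alpha|_\tau)=\beta$.
\end{itemize}

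\textbf{Main obstacle.} The delicate step is justifying that the blocks containing the entries of $\tau$ cover exactly the spots $s+1,\ldots,s+m$, with no block straddling a run boundary. This follows from \Cref{lemma: ties do not break over runs} together with \Cref{lemma: competitor outcome for ties}. A block lies in one run and occupies consecutive spots, so the run boundaries are block boundaries. That fixes the offset $s$ independently of $\alpha$, which is what makes the shift a well-defined map.
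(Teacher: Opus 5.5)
Your proof is correct and uses the same map as the paper. The paper shifts by $a_{\tau_1}-1$, and its injectivity step shows this equals the number of spots preceding $\tau$, i.e.\ your $s$.

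The differences lie in how the argument is completed:
\begin{itemize}
\item For well-definedness, the paper works inside $\alpha^\uparrow$. It shows that $\alpha|_\tau$ is a consecutive stretch of $\alpha^\uparrow$ beginning at a strict increase (via \Cref{lemma: ties do not break over runs}), then verifies condition (c) of \Cref{prop:equivalent characterizations of Fubini}. You instead use that $\psi(\alpha)$ refines the run decomposition and read off condition (b) from the Ranks Lemma, which is more direct.
\item More substantively, the paper never constructs preimages. It proves injectivity, arguing separately that $a_{\tau_1}$ is constant on the fiber. It then matches cardinalities, $|\FR{\tau}|=2^{\len(\tau)-1}=|\FR{\len(\tau)}^\uparrow|$. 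Its step ``each refinement of $\tau$ yields a distinct element of $\FR{\tau}$'' is essentially the Ranks Lemma computation you carry out anyway.
\end{itemize}

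Because your shift is defined from $\pi$ alone, injectivity comes for free. Your explicit surjectivity argument is also the more robust one. It transfers verbatim to \Cref{restricting to ell-interval is still cool} if you split the other runs of $\pi$ into singletons rather than keeping them whole, so that the constructed $\alpha$ is $\ell$-interval. There, no independent count of $\FR{\tau}(\ell)$ is at hand, and the paper's proof of that lemma only checks that the image lands in $\FR{\len(\tau)}^\uparrow(\ell)$.
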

\begin{proof}
    Let $k = \len(\tau)$. We define the function $\varphi : \FR{\tau} \to \FR{k}^{\uparrow}$, with $\beta=(b_1,b_2,\ldots,b_k)\in \FR{\tau}$, by
    \[\varphi(\beta) = \varphi((b_1, b_2, \ldots, b_k)) = (b_1 - (b_1-1), b_2 - (b_1-1), \ldots, b_k - (b_1-1)).\]
    We claim that $\varphi$ is injective and surjective, and hence a bijection. 
    Before that, we begin by showing that $\varphi$ is well-defined: namely, that for any $\alpha \in \FR{\tau}$, we have $\varphi(\alpha) \in \FR{k}^{\uparrow}$.\\
    
    \noindent\textbf{Well-defined.} 
    Suppose
    that the run $\tau$ of $\pi$ has the form $\tau= \tau_1 \tau_2, \cdots \tau_k$. 
        Let $\beta\in\FR{\tau}$. 
        By \Cref{def:restrict alpha to a run}, there exists
    $\alpha=(a_1,a_2,\ldots,a_n)\in\out_{\FR{n}}^{-1}(\pi)$ such that $\beta=\alpha|_{\tau}$.
    In the context of Fubini rankings, recall that for each $i\in[k]$, $\tau_i$ is a competitor in the race with rank $a_{\tau_i}$.
    Since  $\tau$ is a run of $\pi$, we claim that the competitors $\tau_1, \tau_2, \ldots, \tau_k$ appear in consecutive indices in $\pi$ and the ranks of these competitors must be weakly increasing.
    Recall that the ordered set partition corresponding to $\alpha$ is $\psi(\alpha)=(B_1,B_2,\ldots,B_t)$ where for each $j\in[t]$, $B_j=\{a_i:a_i=j\}$, and the entries in $B_j$ are written in increasing order. Moreover, since $\Flatten(\psi(\alpha))=\pi$ (see \Cref{thm:ordered set partition is the outcome}), then for a specific run $\tau=\tau_1\tau_2\cdots\tau_k$, $\tau_1$ corresponds to the first entry $B_{(x,1)}$ in some block $B_x$, and $\tau_k$ corresponds to the last entry of a block $B_{(y,\len(B_y))}$ for some $x\leq y$. 
    Additionally, every entry between $B_{(x,1)}$ and $B_{(y,\len(B_y))}$ is contained in the run $\tau$, otherwise there would exist some descent between $B_{(x,1)}$ and $B_{(y,\len(B_y))}$, which is a contradiction to $\tau$ being a run of ascents in $\pi$. Therefore, the ranks of the competitors in $\tau$ appear in weakly increasing order in $\beta=\alpha|_\tau$.
    Thus $\beta=\alpha|_\tau=(a_{\tau_1},a_{\tau_2},\ldots,a_{\tau_k})$ satisfies $a_{\tau_i}\leq a_{\tau_{i+1}}$ for all $i\in[k-1]$.
    Therefore, $\beta = (a_{\tau_1}, a_{\tau_2}, \ldots, a_{\tau_k})$ is a \textit{subtuple} of $\alpha^{\uparrow}=(a_1',a_2',\ldots,a_n')$, which is the weakly increasing rearrangement of $\alpha$.  
    By subtracting $a_{\tau_1}-1$ from each entry in $\beta$,
    we obtain the tuple 
    \[(a_{\tau_1}-(a_{\tau_1}-1),a_{\tau_2}-(a_{\tau_1}-1),\ldots,a_{\tau_k}-(a_{\tau_1}-1))= \varphi(\beta).\] 
    Note that $a_{\tau_i}-(a_{\tau_1}-1)\in [n]$ for all $i\in[k]$, and hence $\varphi(\beta) \in [n]^k$.
    Moreover,  $a_{\tau_1}-(a_{\tau_1}-1)=1$. 
    Since $\beta$
    is weakly increasing, then $\varphi(\beta)$ is weakly increasing, so it suffices to 
    show that $\varphi(\beta)$ is a Fubini ranking with $k$ competitors.

    Since $\tau$ is a run of $\pi$ (which is a maximal contiguous increasing subword), and $\alpha|_{\tau}=(a_{\tau_1}, a_{\tau_2}, \ldots , a_{\tau_k})$ is weakly increasing, 
    then $\alpha|_{\tau}$ appears in $\alpha^{\uparrow}=(a_1',a_2',\ldots,a_n')$ with $a_i\leq a_{i+1}'$ for all $i\in[n]$, in consecutive entries. 
    Thus, there exists an integer $r \in [0,n-1]$, such that for every $i \in [k]$, $a_{\tau_i} = a'_{i+r}$. \\
    
    \noindent \textbf{Claim 1}:   $a_{r}'<a_{r+1}'$.\\
    
    Suppose for contradiction that $a_r'=a_{r+1}'$. 
    Then there exists a competitor $x$ with rank $a_{r}'=a_{r+1}'=a_{\tau_1}$. 
    There are now two cases to consider: $x\in\tau$ or $x\notin\tau$. 
    \begin{itemize}
    \item If $x\in\tau$, then $x>\tau_1$, since $\tau_1$ is the first element of $\tau$.
However, the competitors in $\tau$ have preferences $a_{i+r}'$ with $i\in[k]$, which implies that we now have an additional competitor $x$ in the run $\tau$ besides the competitors $\tau_1,\tau_2,\ldots,\tau_k$, and so $\len(\tau)>k$, a contradiction. Thus, $x\notin\tau$. 
\item If $x\notin\tau$, then by \Cref{lemma: ties do not break over runs}, $a_{r}'\neq a_{r+1}'$, as desired.
    \end{itemize}
    Thus, we have established the claim.\\

\noindent \textbf{Claim 2}: $\varphi(\beta)=(a_{\tau_1}-(a_{\tau_1}-1),a_{\tau_2}-(a_{\tau_1}-1),\ldots,a_{\tau_k}-(a_{\tau_1}-1))$ satisfies the conditions in \Cref{lemma: weakly increasing Fubini proof}. \\

To prove this claim, we recall that since $\alpha$ is a Fubini ranking, $\alpha^{\uparrow}$ satisfies the conditions in \Cref{lemma: weakly increasing Fubini proof}. Namely, for every $i \in [n]$, we have $a'_i = a'_{i-1}$ or $a'_i = i$. 
    By Claim 1, since  $a'_{r} < a'_{r+1}$, we must have that $a'_{r+1} = r+1$. 
    This implies that
    \begin{align}\label{eq:solve for r}r+1 = a'_{r+1} = a_{\tau_1}. \end{align}
    Solving for $r$ in \Cref{eq:solve for r} yields $r = a_{\tau_1}-1$. 
    Moreover, for any $i \in [k]$, we can use the facts that $a_{\tau_i}=a_{i+r}'$ and $r=a_{\tau_1}-1$, to arrive at the following:
    \begin{itemize}
        \item If $a'_{i+r}=a'_{(i-1)+r}$, then  the $i$th entry of $\varphi(\beta)$ satisfies
        \begin{align*}
            a_{\tau_i}-(a_{\tau_1}-1) &= a'_{r+i}-r &\mbox{(since $a_{\tau_i}=a_{i+r}'$ and $a_{\tau_1}-1=r$)}\\
            &= a'_{(i-1)+r}-r &\mbox{(by assumption $a_{(i-1)+r}'=a_{i+r}'$)}\\
            &= a_{\tau_{i-1}}-(a_{\tau_1}-1) &\mbox{(since $a_{\tau_{i-1}}=a_{(i-1)+r}'$)}.
        \end{align*}
         This implies $a_{\tau_i}=a_{\tau_{i-1}}$.
        
        \item If $a'_{i+r}=i+r$, then the $i$th entry of $\varphi(\beta)$ satisfies
        \begin{align*}
        a_{\tau_i} -(a_{\tau_1}-1)&= a'_{i+r}-r &\mbox{(since $a_{\tau_i}=a_{i+r}'$ and $a_{\tau_1}-1=r$)}\\
        &= i+r-r &\mbox{(by assumption $a_{i+r}'=i+r$)}\\
        &= i    .
        \end{align*}
        This implies that $a_{\tau_i}=i$.
    \end{itemize}
Therefore, $\varphi(\beta)$ satisfies the conditions in \Cref{lemma: weakly increasing Fubini proof}, and thus $\varphi(\beta)\in\FR{k}^\uparrow$, as claimed.
\\

\noindent\textbf{Injectivity:} Let $\alpha \in \FR{n}$, and let $\mathcal{B} = \psi(\alpha)$, with $\psi$ defined in \Cref{lem:bijection between Fubini rankings and ordered set partitions}. Recall that $\tau = \tau_1 \tau_2\cdots \tau_k$ is a run in $\pi$. 
    By \Cref{lem:descent force blocks}, we know that $\tau_1$ is the first element in some block $B_m$. By \Cref{lemma: ranks determined by number of tied competitors}, we know that the rank of competitor $\tau_1$ is $1 + \sum_{i=1}^{m-1} |B_i|$. By \Cref{thm:ordered set partition is the outcome}, we have that $1 + \sum_{i=1}^{m-1} |B_i|$ also happens to be the index of $\tau_i$ in $\pi$. 
    Since the index of $\tau_1$ in $\pi$ depends only on $\pi$, the rank of competitor $\tau_1$ is the same in any Fubini ranking $\alpha \in \out^{-1}_{\FR{n}}(\pi)$.  

    Let $\beta=(b_1,_2,\ldots,b_k), \delta=(d_1,d_2,\ldots,d_k) \in \FR{\tau}$ and suppose that $\varphi(\beta) = \varphi(\delta)$.  Hence $(b_1-(b_1-1), b_2-(b_1-1), \ldots, b_k - (b_1-1)) = (d_1-(d_1-1), d_2-(d_1-1), \ldots, d_k - (d_1-1))$. However, by definition of $\FR{\tau}$, we have that $b_1 = d_1 = a_{\tau_1}$, the rank of competitor $\tau_1$ in \textit{any} $\alpha \in \FR{n}$. 
    Therefore, applying $\varphi$ to $\beta$ and $\delta$ causes the same number to be subtracted from their entries. Since $\varphi(\beta) = \varphi(\delta)$, we must then also have $\beta = \delta$. Thus $\varphi$ is injective.\\ 
    
\noindent \textbf{Cardinality of the sets:} Since the sets $\FR{\tau}$ and $\FR{k}^\uparrow$ 
are finite sets, and $\varphi$ is injective, the bijection follows provided the sets have the same cardinality.
By \Cref{lem:weakly increasing Fubinis park in identity order}, $|\FR{k}^\uparrow|=2^{k-1}$.
Now consider $\FR{\tau}$, which is the set of Fubini rankings on $n$ competitors  restricted to the indices in $\tau$. 
Consider $\alpha\in\out_{\FR{n}}^{-1}(\pi)$. 
Using the bijection $\psi$ (\Cref{lem:bijection between Fubini rankings and ordered set partitions}), the element $\alpha$ has a corresponding ordered set partition of $[n]$, i.e.,
$\psi(\alpha)=(B_1,B_2,\ldots,B_t)$. 
Since $\tau$ is a run, i.e., a maximal contiguous increasing subword of $\pi$ 
, by \Cref{thm:ordered set partition is the outcome}, 
there exist indices $x\leq y$  such that 
$\tau=\Flatten(B_x,B_{x+1},\ldots,B_y)$. 
Moreover, the number of values appearing in $\tau$ is $k=\len(\tau)$, and so the number of elements appearing in $\Flatten(B_x,B_{x+1},\ldots,B_y)$ is also $k$. Then we can further subdivide the set partition $(B_x,B_{x+1},\ldots,B_y)$ into other ordered set partitions by taking a comma between any two elements in any block and replacing the comma with ``$\},\{$''. 
Any ordered set partition constructed in this way flattens to $\tau$.
Since there are $k-1$ commas in the set partition $(B_x,B_{x+1},\ldots,B_y)$ that we can replace with ``$\},\{$'', we can construct $2^{k-1}$ many distinct ordered set partitions, all of which flatten to $\tau$. 
Each such ordered set partition corresponds uniquely to an element of $\FR{\tau}$. 
Using an analogous argument as that presented in the proof of \Cref
{corr: count of flat(pi) is 2^n-k}, it follows that every element of $\FR{\tau}$ can be constructed as described. 
Therefore, $|\FR{\tau}|=2^{k-1}$, which completes the proof.
\qedhere
    
\end{proof}
\begin{remark}\label{remark: a normalized run is a fubini ranking}
    \Cref{lem:save us} shows that for a given $\pi\in\Sym_n$, with $\pi$ consisting of $k$ runs, we can count the set $\out^{-1}_{\FR{n}}(\pi)$ by counting separately the number of ways to assign valid ranks to the competitors in the runs of $\pi$. Moreover, any tuple of valid ranks for the competitors of a particular run $\tau$ that is standardized to be a tuple in $[\len(\tau)]$ is a valid Fubini ranking of length $\len(\tau)$. Therefore, by \Cref{lem:save us},
    \begin{equation*}
        |\out^{-1}_{\FR{n}}(\pi) |= \prod_{i=1}^k |\out^{-1}_{\FR{\len(\tau_i)}}(123\cdots \len(\tau_i))|.
    \end{equation*}
\end{remark}
\begin{example}
    Consider $\pi\in\Sym_8$ such that $\pi=36145278$. Note that the runs of $\pi$ are $\tau_1=36$ with $\len(\tau_1)=2$, $\tau_2=145$ with $\len(\tau_2)=3$, and $\tau_3=278$ with $\len(\tau_3)=3$. There are 32 Fubini rankings with outcome $\pi$:
     \resizebox{\textwidth}{!}{
        $\out^{-1}_{\FR{n}}(\pi)=
        \left\{
        \begin{tabular}{ccccc}
        (3, 6, 1, 3, 3, 1, 6, 6),&
        (3, 6, 1, 3, 3, 1, 6, 8),&
        (3, 6, 1, 3, 3, 1, 7, 7),&
        (3, 6, 1, 3, 3, 1, 7, 8),&
        (3, 6, 1, 3, 5, 1, 6, 6), \\
        (3, 6, 1, 3, 5, 1, 6, 8),&
        (3, 6, 1, 3, 5, 1, 7, 7),&
        (3, 6, 1, 3, 5, 1, 7, 8),&
        (3, 6, 1, 4, 4, 1, 6, 6),&
        (3, 6, 1, 4, 4, 1, 6, 8),\\
        (3, 6, 1, 4, 4, 1, 7, 7),&
        (3, 6, 1, 4, 4, 1, 7, 8),&
        (3, 6, 1, 4, 5, 1, 6, 6),&
        (3, 6, 1, 4, 5, 1, 6, 8),&
        (3, 6, 1, 4, 5, 1, 7, 7), \\
        (3, 6, 1, 4, 5, 1, 7, 8),&
        (3, 6, 1, 3, 3, 2, 6, 6),&
        (3, 6, 1, 3, 3, 2, 6, 8),&
        (3, 6, 1, 3, 3, 2, 7, 7),&
        (3, 6, 1, 3, 3, 2, 7, 8), \\
        (3, 6, 1, 3, 5, 2, 6, 6),&
        (3, 6, 1, 3, 5, 2, 6, 8),&
        (3, 6, 1, 3, 5, 2, 7, 7),&
        (3, 6, 1, 3, 5, 2, 7, 8),&
        (3, 6, 1, 4, 4, 2, 6, 6), \\
        (3, 6, 1, 4, 4, 2, 6, 8),&
        (3, 6, 1, 4, 4, 2, 7, 7),&
        (3, 6, 1, 4, 4, 2, 7, 8),&
        (3, 6, 1, 4, 5, 2, 6, 6),&
        (3, 6, 1, 4, 5, 2, 6, 8), \\
        (3, 6, 1, 4, 5, 2, 7, 7),&
        (3, 6, 1, 4, 5, 2, 7, 8)&&&\\
    \end{tabular}
    \right\}.$}
    
    \noindent Moreover, 
    \begin{align*}
        \out^{-1}_{\FR{n}}(12) = \{(1,1),(1,2)\} \quad \text{ and } \quad \out^{-1}_{\FR{n}}(123) = \{(1,1,1), (1,1,3), (1,2,2), (1,2,3)\}. 
    \end{align*}
    Notice that 
    \[32 = 2^5 = 2^{\len(\tau_1)-1}\cdot 2^{\len(\tau_2)-1}\cdot 2^{\len(\tau_3)-1},\]
    and by \Cref{lem:weakly increasing Fubinis park in identity order},
    \[|\out^{-1}_{\FR{n}}(123\cdots n )|=2^{n-1},\]
    which implies 
    \[|\out^{-1}_{\FR{n}}(\pi )|=\prod_{i=1}^k 2^{\len(\tau_i)-1}=\prod_{i=1}^k |\out^{-1}_{\FR{\len(\tau_i)}}(123\cdots \len(\tau_i))|.\]
\end{example}
We now give the Fubini ranking analog to \Cref{corr: count of flat(pi) is 2^n-k}. 

\begin{corollary}\label{cor: fubini count is 2^n-k}
    If $\pi\in\Sym_n$ has $k$ runs,  then
    $|\out_{\FR{n}}^{-1}(\pi)|=2^{n-k}$. 
    Moreover, if $\tau_1,\tau_2,\ldots,\tau_k$ are the runs of $\pi$, then 
    $|\out_{\FR{n}}^{-1}(\pi)|=\prod_{i=1}^k|\out_{\FR{\len(\tau_i)}}^{-1}(123\cdots\len(\tau_i))|$.
\end{corollary}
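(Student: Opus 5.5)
The first claim will follow by transporting the count of \Cref{corr: count of flat(pi) is 2^n-k} across the bijection $\psi$. By \Cref{lem:bijection between Fubini rankings and ordered set partitions}, $\psi:\FR{n}\to\mathcal{OP}_n$ is a bijection, so its restriction to $\out_{\FR{n}}^{-1}(\pi)$ is injective. By \Cref{thm: OSP preimage = flatten preimage}, the image of this restriction is exactly $\Flatten^{-1}(\pi)$. Hence
\[|\out_{\FR{n}}^{-1}(\pi)|=|\psi(\out_{\FR{n}}^{-1}(\pi))|=|\Flatten^{-1}(\pi)|=2^{n-k},\]
where the last equality is \Cref{corr: count of flat(pi) is 2^n-k}, applied to $\pi$ with its $k$ runs.

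For the product formula, let $\tau_1,\tau_2,\ldots,\tau_k$ be the runs of $\pi$. The lengths satisfy $\sum_{i=1}^k \len(\tau_i)=n$. This gives
\[2^{n-k}=\prod_{i=1}^k 2^{\len(\tau_i)-1}.\]
Next, apply \Cref{lem:weakly increasing Fubinis park in identity order} with $n$ replaced by $\len(\tau_i)$. It gives
\[|\out^{-1}_{\FR{\len(\tau_i)}}(123\cdots\len(\tau_i))|=2^{\len(\tau_i)-1}\]
for each $i$. Substituting these factors into the product yields the second statement. This matches the reasoning in \Cref{remark: a normalized run is a fubini ranking}, which obtains the same product through \Cref{lem:save us}.

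I do not expect a genuine obstacle here. The only care needed is in the first step: the equality of cardinalities requires $\psi$ to be injective on the preimage set, not merely to map it into $\Flatten^{-1}(\pi)$, and it requires the image to be all of $\Flatten^{-1}(\pi)$. Both facts are already supplied by the cited lemma and theorem. The rest is arithmetic with run lengths.
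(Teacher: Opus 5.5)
Your proof is correct and follows the paper's argument. The first claim is obtained exactly as in the paper, by transporting $|\Flatten^{-1}(\pi)|=2^{n-k}$ across the bijection $\psi$ via the preimage theorem. The second claim is obtained, as in the paper, by writing $2^{n-k}=\prod_{i=1}^k 2^{\len(\tau_i)-1}$ and identifying each factor with $|\out^{-1}_{\FR{\len(\tau_i)}}(123\cdots\len(\tau_i))|$ through the lemma on weakly increasing Fubini rankings.

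The only difference is presentational. The paper phrases the second step as a per-run count of block refinements, so that it doubles as an alternate proof of the first claim. You instead derive it arithmetically from the first claim, which is all the statement requires.
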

\begin{proof}
    If $\pi\in\Sym_n$ has $k$ runs, then by \Cref{thm: OSP preimage = flatten preimage} we have $|\psi(\out_{\FR{n}}^{-1}(\pi))|=|\Flatten^{-1}(\pi)|$, and by \Cref{corr: count of flat(pi) is 2^n-k} we have $|\Flatten^{-1}(\pi)|=2^{n-k}$, so by transitivity
    $|\psi(\out_{\FR{n}}^{-1}(\pi))|=2^{n-k}$.

    For the second statement,    consider $\pi\in\Sym_{n}$ with runs $\tau_1,\tau_2,\ldots,\tau_k$. Construct the ordered set partition $\mathcal{B}=(B_1,B_2,\ldots,B_k)$, where, for each $i\in[k]$, $B_i$ consists of the elements in $\tau_i$ listed in increasing order: $\tau_{i,1},\tau_{i,2},\ldots,\tau_{i,\len(\tau_i)}$. 
    Then $\Flatten(\mathcal{B})=\pi$, and in particular $\Flatten(B_i)=\tau_i$ for each $i\in[k]$.
 In the proof of \Cref{corr: count of flat(pi) is 2^n-k}, we established that for each run $\tau_i$ we can construct $2^{\len(\tau_i)-1}$ distinct ordered set partitions of the block $B_i$, whose flattening recovers the run $\tau_i$ in $\pi$. 
 Under the bijection $\psi$, this implies that for each $i\in[k]$, there are $2^{\len(\tau_i)-1}$ distinct ways to place values at the indices in $B_i$ (of a tuple of length $n$) to construct a Fubini ranking whose outcome recovers the run $\tau_i$. 
 Standardize each run $\tau_i$ to the identity permutation $123\cdots \len(\tau_i)\in \Sym_{\len(\tau_i)}$.
 By \Cref{lem:weakly increasing Fubinis park in identity order}, we know that
 for any $m\in \mathbb{N}$, $|\out_{\FR{m}}^{-1}(123\cdots m)|=2^{m-1}$. 
 Taking the product over all $i\in[k]$,
 we have that  
 \[|\out_{\FR{n}}^{-1}(\pi)|=\prod_{i=1}^k |\out_{\FR{\len(\tau_i)}}^{-1}(123\cdots \len(\tau_i))|=\prod_{i=1}^k2^{\len(\tau_i)-1}=2^{n-k}.\] This proves the second statement, while also giving an alternate proof of the first statement.
\end{proof}

We can now use our previous results to give the following formula for the Fubini numbers by counting Fubini rankings based on their parking outcome.

\begin{corollary}\label{corr: fubini numbers are odd}
    For $n\in\NN$, the Fubini numbers are enumerated by 
    \begin{equation}\label{eq: fubini is 1 + stuff}
        \Fub_n=\sum_{\pi\in\Sym_n}|\out_{\FR{n}}^{-1}(\pi)|=\sum_{k=1}^{n}2^{n-k}\cdot A(n,k-1),
    \end{equation}
    where $A(n,k-1)$ is the number of permutations with $k$ runs.
\end{corollary}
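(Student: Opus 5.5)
The identity follows by partitioning $\FR{n}$ according to parking outcome and then grouping permutations by their number of runs.

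Since every Fubini ranking is a parking function, its parking outcome is a well-defined permutation. Hence the fibers $\out_{\FR{n}}^{-1}(\pi)$, for $\pi\in\Sym_n$, partition $\FR{n}$. The Fubini numbers count $\FR{n}$, so
\[
\Fub_n=|\FR{n}|=\sum_{\pi\in\Sym_n}|\out_{\FR{n}}^{-1}(\pi)|,
\]
which is the first equality.

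For the second equality, I would split the sum over $\pi$ by the number $k$ of runs of $\pi$. Here $k$ ranges over $[n]$, since every permutation in $\Sym_n$ has between $1$ and $n$ runs. By \Cref{cor: fubini count is 2^n-k}, every $\pi$ with $k$ runs contributes exactly $2^{n-k}$. A permutation has $k$ runs if and only if it has $k-1$ descents, so the number of such $\pi$ is $A(n,k-1)$. Summing over $k$ gives
\[
\sum_{k=1}^{n}2^{n-k}\cdot A(n,k-1).
\]

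There is no real obstacle here. The only care needed is the bookkeeping that $\out_{\FR{n}}$ is a total function on $\FR{n}$, so the fibers genuinely partition the set and nothing is double counted. One can also remark that reindexing $j=k-1$ and using the symmetry $A(n,j)=A(n,n-1-j)$ recovers \Cref{eq:formula for Fubs}, but this is not needed for the statement.
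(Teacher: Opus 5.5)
Your proof is correct and follows essentially the same route as the paper: partition $\FR{n}$ into the fibers over each outcome $\pi\in\Sym_n$, apply the count $2^{n-k}$ for a permutation with $k$ runs, and group permutations by their number of descents using the Eulerian numbers. The paper also checks $n=1$ separately, which is unnecessary. For the $2^{n-k}$ count it cites the ordered-set-partition result \Cref{corr: count of flat(pi) is 2^n-k}, which is equivalent to \Cref{cor: fubini count is 2^n-k} via \Cref{thm: OSP preimage = flatten preimage}; neither difference changes the argument.
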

\begin{proof}
    If $n=1$, then there is only one Fubini ranking $\alpha=(1)$, so $\Fub_1=1$. 
    Moreover, the right-hand side of \eqref{eq: fubini is 1 + stuff} is
    \begin{equation}
        \sum_{k=1}^1 2^{1-k}\cdot A(1,k-1) = 2^0\cdot A(1,0) = 1.
    \end{equation}
    Next, fix $n\in \NN$ such that $n>1$. Since every Fubini ranking has a unique parking outcome, we count the number of Fubini rankings of length $n$ using the possible parking outcomes, i.e., 
    \begin{equation}\label{eq: number of Fub's is sum of outcomes}
    \Fub_n=\sum_{\pi\in\mathfrak{S}_n}\left|\out_{\FR{n}}^{-1}(\pi)\right|. 
    \end{equation}
    Choose an arbitrary $\pi\in\mathfrak{S}_n$. By \Cref{corr: count of flat(pi) is 2^n-k}, the number of Fubini rankings with parking outcome $\pi$ is $2^{n-k}$, where $k$ is the number of runs in $\pi$. 
    Therefore, we can rewrite the equality as \begin{equation}\label{eq: number of Fub's is sum of powers of two}
        \Fub_n=\sum_{k=1}^{n} \sum_{\substack{\pi\in\mathfrak{S}_n \\ \pi = \tau_1\tau_2\cdots\tau_k}}2^{n-k}. 
    \end{equation}
    Furthermore, if $\pi$ is a product of $k$ runs then it contains $k-1$ descents. The number of $\pi\in\mathfrak{S}_n$ with $k-1$ descents is enumerated by the Eulerian number $A(n,k-1)$. Therefore, \Cref{eq: number of Fub's is sum of outcomes} can be expressed as
    \begin{equation}\label{eq: the punchline}
        \Fub_n= \sum_{\pi\in\mathfrak{S}_n}\left|\out_{\FR{n}}^{-1}(\pi)\right| = \sum_{k=1}^{n}2^{n-k}\cdot A(n,k-1),
    \end{equation}
    which completes the proof.
\end{proof}

\section{Enumerating Unit Fubini Rankings by their Parking Outcome}\label{sec:UFRs}
A unit Fubini ranking is a Fubini ranking in which at most two competitors tie for any single rank. 
For example,  $(1,1,3)$ is a unit Fubini ranking while $(1,1,1)$ is not, as three competitors tied for the first rank. 
We let $\UFR{n}$ denote the set of unit Fubini rankings with $n$ competitors. 
It is known that $|\UFR{n}|$ satisfies the recurrence \cite[\seqnum{A080599}]{OEIS}: 
\[|\UFR{n}|=n\cdot|\UFR{n-1}|+n\cdot\left(\frac{n-1}{2}\right)\cdot |\UFR{n-2}|\]
and a closed formula is given by 
\[|\UFR{n}|=n!\frac{\left((1+\sqrt{3})^{n+1}-(1-\sqrt{3})^{n+1}\right)}{2^{n+1}\sqrt{3}}.\]

As is our convention, we let $\UFR{n}^\uparrow$ denote the set of weakly increasing unit Fubini rankings of length $n$. 
In \cite[Theorem 3.18]{weak}, Elder et.\ al prove that the number of weakly increasing unit Fubini rankings of length $n$ is enumerated by the $n$-th Fibonacci number, i.e., $|\UFR{n}^\uparrow|=F_{n}$. We recall that the Fibonacci numbers satisfy the recurrence,
\[F_n=F_{n-1}+F_{n-2}.\]
For convenience, we set the starting values at $F_1=1$ and $F_2=2$, so that the index on the length of the Fubini rankings agrees with the index of the Fibonacci numbers. 
This result also follows from 
restricting \Cref{lem:weakly increasing Fubinis park in identity order} to the set of unit Fubini rankings, which implies that 
\[\out_{\UFR{n}}^{-1}(123\cdots n)=\UFR{n}^\uparrow.\]
Hence the cardinalities of the sets are also equal. 
We state this formally below.

 \begin{corollary}\cite[Theorem 3.18]{weak}:\label{cor:outcome identity is all weakly increasing}
     For all $n\in\mathbb{N}$, $|\out^{-1}_{\UFR{n}}(123\cdots n)|=|\UFR{n}^\uparrow|=F_{n}$, where $F_{n}$ is the $n$-th Fibonacci number.
 \end{corollary}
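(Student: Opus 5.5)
The corollary has two equalities, and I will prove them in order. The first is $\out^{-1}_{\UFR{n}}(123\cdots n)=\UFR{n}^\uparrow$. Since $\UFR{n}\subseteq\FR{n}$, I will intersect both sides of \Cref{lem:weakly increasing Fubinis park in identity order} with $\UFR{n}$. That lemma gives $\out^{-1}_{\FR{n}}(123\cdots n)=\FR{n}^\uparrow$, and intersecting yields
\[
\out^{-1}_{\UFR{n}}(123\cdots n)=\out^{-1}_{\FR{n}}(123\cdots n)\cap\UFR{n}=\FR{n}^\uparrow\cap\UFR{n}=\UFR{n}^\uparrow .
\]
This works because the parking outcome of a unit Fubini ranking is the same as its outcome viewed as a Fubini ranking. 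Equality of the cardinalities then follows at once.

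For the count $|\UFR{n}^\uparrow|=F_n$, I will use the characterization in the proof of \Cref{lem:weakly increasing Fubinis park in identity order}. There, a weakly increasing Fubini ranking is built by setting $a_1=1$ and then, for each $i\ge 2$, choosing either $a_i=a_{i-1}$ (tie with the previous competitor) or $a_i=i$ (start a new rank). This is equivalent to a composition of $n$ whose parts are the consecutive blocks of tied competitors. The unit condition says every block has size at most $2$. So $\UFR{n}^\uparrow$ is in bijection with compositions of $n$ into parts $1$ and $2$.

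Next I will condition on the size of the last block. If it has size $1$, deleting the last competitor leaves an element of $\UFR{n-1}^\uparrow$. If it has size $2$, deleting the last two competitors leaves an element of $\UFR{n-2}^\uparrow$. In each case the new rank is forced to equal its index, so the deletion is reversible and gives $|\UFR{n}^\uparrow|=|\UFR{n-1}^\uparrow|+|\UFR{n-2}^\uparrow|$. The initial values are $|\UFR{1}^\uparrow|=1$, from $(1)$, and $|\UFR{2}^\uparrow|=2$, from $(1,1)$ and $(1,2)$. These match the stated seeds $F_1=1$ and $F_2=2$, so induction finishes the count.

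The main thing to handle carefully is the bijection with compositions, specifically that deleting the last block preserves the Fubini condition (c) of \Cref{prop:equivalent characterizations of Fubini}. The remaining entries are unchanged, so condition (c) still holds for the prefix, which is exactly what is needed.
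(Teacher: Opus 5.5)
Your proof is correct. The first equality is obtained exactly as in the paper: you restrict \Cref{lem:weakly increasing Fubinis park in identity order} to $\UFR{n}$, and your intersection $\out^{-1}_{\FR{n}}(123\cdots n)\cap\UFR{n}$ makes that restriction explicit.

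You depart from the paper on the count $|\UFR{n}^\uparrow|=F_n$. The paper does not prove this; it imports it from Elder et al.\ \cite[Theorem 3.18]{weak}. You derive it from the paper's own characterization (c) in \Cref{prop:equivalent characterizations of Fubini}. You identify weakly increasing unit Fubini rankings with compositions of $n$ into parts $1$ and $2$, then split on the size of the last block.

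Your route is self-contained, and it generalizes. If you replace ``at most $2$'' by ``at most $\ell+1$'' in the same last-block argument, you get $|\FR{n}^\uparrow(\ell)|=\sum_{x=0}^{\ell}|\FR{n-1-x}^\uparrow(\ell)|$ with seeds $2^{j-1}$ for $j\le\ell+1$. That is \Cref{thm:recurrences for weakly increasing ell frs} together with \eqref{eq:pingala count}, which the paper also cites rather than proves.

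Your explicit check $|\UFR{2}^\uparrow|=2$ also fixes the indexing $F_1=1$, $F_2=2$ used in Section 3. This matters because the introduction's convention $F_2=1$ would make the statement false at $n=2$.
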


We now use \Cref{cor:outcome identity is all weakly increasing} to give counts for the number of unit Fubini rankings with a fixed parking outcome.

\begin{theorem}\label{thm:number of UFR with outcome pi}
        If $\pi\in\mathfrak{S}_n$ has runs  $\tau_1,\tau_2,\ldots,\tau_k$ with lengths $\len(\tau_1), \len(\tau_2), \cdots, \len(\tau_{k})$, respectively, 
        then 
    \begin{equation}
        |\out^{-1}_{\UFR{n}}(\pi)| 
        = \prod_{i=1}^kF_{\len(\tau_i)}\qedhere
    \end{equation}
    where $F_i$ denotes the $i$-th Fibonacci number.
\end{theorem}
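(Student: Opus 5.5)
Our plan is to restrict the ordered set partition description of Section~\ref{sec:FRs} to unit Fubini rankings and show that the count factors over the runs of $\pi$. A unit Fubini ranking is a Fubini ranking $\alpha$ in which every block of $\psi(\alpha)$ has size at most $2$. Combining \Cref{thm: OSP preimage = flatten preimage} with \Cref{lem:bijection between Fubini rankings and ordered set partitions}, $\psi$ restricts to a bijection between $\out^{-1}_{\UFR{n}}(\pi)$ and the set of $\mathcal{B}\in\Flatten^{-1}(\pi)$ all of whose blocks have size at most $2$. So it suffices to count the latter set.

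By the proof of \Cref{corr: count of flat(pi) is 2^n-k}, together with \Cref{lem:descent force blocks}, every $\mathcal{B}\in\Flatten^{-1}(\pi)$ arises in exactly one way as follows.
\begin{enumerate}
\item[(1)] Start from the coarsest partition $(\tau_1,\tau_2,\ldots,\tau_k)$, one block per run.
\item[(2)] Independently, for each $i\in[k]$, split the block $\tau_i$ into contiguous nonempty pieces.
\end{enumerate}
Every block of $\mathcal{B}$ therefore lies inside a single run. Hence the condition ``all blocks have size at most $2$'' is a condition on each run separately, and the set we want is in bijection with the product over $i\in[k]$ of the sets of compositions of $\len(\tau_i)$ into parts of size $1$ or $2$.

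The number of compositions of $m$ into parts in $\{1,2\}$ is independent of which letters make up the run. It therefore equals the number of such partitions of the block $123\cdots m$, namely $|\out^{-1}_{\UFR{m}}(123\cdots m)|$, by applying the same correspondence to the identity permutation, which has a single run. By \Cref{cor:outcome identity is all weakly increasing} this equals $F_m$ under the paper's indexing convention. Alternatively, condition on whether the last part is $1$ or $2$: this gives the recurrence $c_m=c_{m-1}+c_{m-2}$ with $c_1=1$ and $c_2=2$. Taking the product over the runs yields $\prod_{i=1}^k F_{\len(\tau_i)}$.

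The main obstacle is the second step: justifying carefully that the map $\mathcal{B}\mapsto$ (tuple of its restrictions to the runs) is a bijection that preserves block sizes. This needs two facts, both already available.
\begin{enumerate}
\item[(a)] No block straddles two runs. This holds by \Cref{lemma: ties do not break over runs}, or equivalently by \Cref{lem:descent force blocks}.
\item[(b)] Every block is a contiguous segment of a run.
\end{enumerate}
Once these are in place, the unit restriction is just a restriction on the sizes of the contiguous pieces, and the factorization is immediate. A direct alternative is to invoke \Cref{lem:save us}, noting that the shift map $\varphi$ preserves multiplicities of ranks and so sends unit restrictions to $\UFR{\len(\tau)}^{\uparrow}$. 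We prefer the set partition route because it avoids re-verifying the well-definedness of $\varphi$.
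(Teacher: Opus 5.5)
Your proposal is correct and is essentially the paper's argument: it identifies $\out^{-1}_{\UFR{n}}(\pi)$ with the elements of $\Flatten^{-1}(\pi)$ whose blocks have size at most $2$, refines each run independently as in \Cref{cor: fubini count is 2^n-k}, and identifies each run's factor with $|\out^{-1}_{\UFR{m}}(12\cdots m)|=F_m$ via \Cref{cor:outcome identity is all weakly increasing}. Your write-up is more explicit than the paper's ``analogous argument,'' since you state the unit condition as a block-size restriction and give the self-contained recurrence $c_m=c_{m-1}+c_{m-2}$ with $c_1=1$, $c_2=2$, which matches the Section~3 indexing.
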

\begin{proof}
      By \Cref{cor:outcome identity is all weakly increasing}, if $\pi = 1 2 \cdots n$, then $|\out^{-1}_{\UFR{n}}(\pi)| = |\UFR{n}^{\uparrow}| = F_n$.
      Now suppose that $\pi$ has runs $\tau_1,\tau_2,\ldots,\tau_k$. Construct the ordered set partition $\mathcal{B} = (B_1,B_2,\ldots, B_k)$, where, for each $i\in[k]$, $B_i$ consists of the elements in $\tau_i$ listed in increasing order: $\tau_{i,1}, \tau_{i,2},\ldots, \tau_{i,\len(\tau_i)}$. Then $\Flatten(\mathcal{B})=\pi$, and in particular $\Flatten(B_i)=\tau_i$ for each $i\in [k]$.
      Using an analogous argument as that in \Cref{cor: fubini count is 2^n-k} for weakly increasing unit Fubini rankings, we get that 
       \[|\out_{\UFR{n}}^{-1}(\pi)|=\prod_{i=1}^k |\out_{\UFR{\len(\tau_i)}}^{-1}(123\cdots \len(\tau_i))|=\prod_{i=1}^kF_{\len(\tau_i)},\]
      which gives the desired count.
     \qedhere
\end{proof}

We now give a new formula for the number of unit Fubini rankings with $n$ competitors.
\begin{corollary}\label{cor:formula for uni fubini using compositions}
Fix $n\in \NN$. 
Let $\Comp(n,k)$ be the set of compositions of $n$ with $k$ parts. Given $\textbf{c}=(c_1,c_2,\ldots,c_k)\in\Comp(n,k)$, let $A(n,\textbf{c})$ denote the number of permutations $\pi=\tau_1\tau_2\cdots\tau_k\in\Sym_n$ with $k$ runs which satisfy $\len(\tau_i)=c_i$ for all $i\in[k]$. 
Then 
    \[|\UFR{n}| =\sum_{k=1}^{n}\left(\sum_{\textbf{c}=(c_1,c_2,\ldots,c_k)\in\Comp(n,k)}A(n,\textbf{c})\cdot\prod_{i=1}^kF_{c_i}\right).\]
\end{corollary}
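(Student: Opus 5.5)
The plan is to partition $\UFR{n}$ according to parking outcome and then regroup the outcomes by the composition formed by their run lengths. Every unit Fubini ranking is a Fubini ranking, hence a parking function, so it has a unique parking outcome $\pi\in\Sym_n$. This gives the disjoint union
\[\UFR{n}=\bigsqcup_{\pi\in\Sym_n}\out^{-1}_{\UFR{n}}(\pi),\qquad\text{so}\qquad |\UFR{n}|=\sum_{\pi\in\Sym_n}|\out^{-1}_{\UFR{n}}(\pi)|.\]

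Next, apply \Cref{thm:number of UFR with outcome pi} to each term. It gives $|\out^{-1}_{\UFR{n}}(\pi)|=\prod_{i=1}^kF_{\len(\tau_i)}$, where $\tau_1,\ldots,\tau_k$ are the runs of $\pi$. This value depends only on the ordered tuple $(\len(\tau_1),\ldots,\len(\tau_k))$.

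To regroup, note that the runs of $\pi$ partition its positions into consecutive nonempty segments. Hence the run-length tuple of any $\pi$ with $k$ runs is a composition $\textbf{c}\in\Comp(n,k)$, and each $\pi$ lies in exactly one class. Conversely, $A(n,\textbf{c})$ is by definition the number of $\pi$ whose run-length tuple is $\textbf{c}$. Some of these numbers may be zero, for instance when $c_1=\cdots$ forces nothing problematic, but zero terms are harmless in the sum.

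Sorting the permutations first by their number of runs $k\in[n]$ and then by their composition $\textbf{c}$ gives
\[\sum_{\pi\in\Sym_n}\prod_{i}F_{\len(\tau_i)}=\sum_{k=1}^n\sum_{\textbf{c}\in\Comp(n,k)}A(n,\textbf{c})\prod_{i=1}^kF_{c_i},\]
which is the claimed formula.

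There is no real obstacle, since \Cref{thm:number of UFR with outcome pi} does all the work. The only point needing care is that the map from permutations to run-length compositions is well defined and that the resulting classes partition $\Sym_n$. I will state this explicitly, noting also that each permutation has between $1$ and $n$ runs, so the range of $k$ is correct.
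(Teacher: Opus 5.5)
Your proposal is correct and matches the paper's proof: you partition $\UFR{n}$ by parking outcome, apply \Cref{thm:number of UFR with outcome pi} to each class, and regroup the permutations by their run-length composition. Your aside about zero terms is garbled but also unnecessary, since $A(n,\textbf{c})\geq 1$ for every composition $\textbf{c}$, and it does not affect the argument.
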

\begin{proof}
By definition \[|\UFR{n}|=\sum_{\pi\in\Sym_n}|\out_{\UFR{n}}^{-1}(\pi)|.\]
By \Cref{thm:number of UFR with outcome pi}, we know $|\out_{\UFR{n}}^{-1}(\pi)|=\prod_{i=1}^k F_{c_i}$. 
Hence, given $\textbf{c}=(c_1,c_2,\ldots,c_k)\in\Comp(n,k)$,  there are $A(n,\textbf{c})$ permutations $\pi$ with runs of lengths given by $\textbf{c}$, and each of those permutations has the same number of unit Fubini rankings parking in that order. 
Thus, we must account for all possible compositions of run lengths and we must also vary the number of runs from 1 to $n$, which yields
\begin{align*}
|\UFR{n}|&=\sum_{\pi\in\Sym_n}|\out_{\UFR{n}}^{-1}(\pi)|\\
&=\sum_{k=1}^{n}\left(\sum_{\textbf{c}=(c_1,c_2,\ldots,c_k)\in\Comp(n,k)}A(n,\textbf{c})\cdot\prod_{i=1}^kF_{c_i}\right)
\end{align*}
as claimed, and this completes the proof.
\end{proof}

\begin{remark}
 Given a composition $\textbf{c}=(c_1,c_2,\ldots,c_k)\in\Comp(n,k)$, the function $A(n,\textbf{c})$ equivalently counts the number of permutations with descents at indices 
 \[i_1=c_1, \quad  i_2=c_1+c_2, \quad \ldots, \quad i_{k-1}= c_1+c_2+\cdots+c_{k-1}.\]
 To determine the number of permutations with a given descent set one can use the recurrences in
 \cite[Proposition 2.1 or Theorem 2.4]{descentpolys}.
 Those formulas then allow us to compute the value of $A(n,\textbf{c})$.
\end{remark}

\begin{example}
One can verify that 
\begin{align*}
    &\UFR{3}\\
    &\quad =\{(1,1,3),(1,3,1),(3,1,1),(1,2,2),(2,1,2),(2,2,1),(1,2,3),(1,3,2),(2,1,3),(2,3,1),(3,1,2),(3,2,1)\}.
\end{align*} 
Hence $|\UFR{3}|=12$.
When $n=3$, observe that 
\begin{itemize}
    \item the only permutation with a single run of length $3$ is the identity permutation $123$, so $A(3,(3))=1$,
    \item  there are two permutations with runs of lengths $(2,1)$ (they are $132,231$), so $A(3,(2,1))=2$, 
    
    \item there are two permutations with runs of lengths $(1,2)$ (they are $213,312$), so $A(3,(1,2))=2$, and
    \item the only permutation of length three with runs of lengths $(1,1,1)$ is $321$, so $A(3,(1,1,1))=1$.
\end{itemize}
Using \Cref{cor:formula for uni fubini using compositions} and setting $n=3$ yields
\begin{align*}
\sum_{\substack{\textbf{c}=(c_1) \\ \textbf{c} \in\Comp(3,1)}}&A(n,\textbf{c})\cdot F_{c_1}+
\sum_{\substack{\textbf{c}=(c_1,c_2) \\ \textbf{c} \in\Comp(3,2)}}A(n,\textbf{c})\cdot F_{c_1}\cdot F_{c_2}+
\sum_{\substack{\textbf{c}=(c_1,c_2,c_3) \\ \textbf{c} \in\Comp(3,3)}}A(n,\textbf{c})\cdot F_{c_1}\cdot F_{c_2}\cdot F_{c_3}\\
&=A(3,(3))F_3+[A(3,(2,1))F_2F_1+A(3,(1,2))F_1F_2]+A(3,(1,1,1))F_1F_1F_1\\
&=1\cdot 3+[2\cdot 2\cdot 1+2\cdot 1\cdot 2]+1\cdot 1\cdot1\cdot1\\
&=12=|\UFR{3}|,
\end{align*}
as expected.    
\end{example}

\section{Enumerating $\ell$-interval Fubini Rankings by their Outcome}\label{sec:ell FRs}
We now consider the generalization of unit Fubini rankings that allow for at most $\ell+1$
competitors to tie at a single rank.
Fubini rankings with this property are called $\ell$-interval
Fubini rankings, and we denote the set of all $\ell$-interval Fubini rankings of length $n$ by $\FR{n}(\ell)$. 
For example, the set $\FR{n}(0)$ is the set of permutations of $[n]$, the set $\FR{n}(1)$ is the set of unit Fubini rankings, and for all $\ell\geq n-1$ the set $\FR{n}(\ell)$ is the set of all Fubini rankings with $n$ competitors. 
In fact, by definition, 
$\FR{n}(\ell)\subseteq \FR{n}(\ell+1)$ for all $\ell\in\NN$.
As before, we let $\FR{n}^\uparrow(\ell)$ denote the set of $\ell$-interval
Fubini ranking that are weakly increasing. 
The number of  weakly increasing $\ell$-interval Fubini rankings can be determined via the following recursive formula.
 
\begin{theorem}[Theorem 4.16 in \cite{aguilarfraga2024interval}]\label{thm:recurrences for weakly increasing ell frs}
    If $n,\ell\in\NN$ with $\ell+1\leq n$, then 
    \begin{equation}   \left|\FR{n}^\uparrow(\ell)\right| = \sum_{x=0}^\ell \left|\FR{n-1-x}^\uparrow(\ell)\right|, \label{eq: count for weakly increasing Fubini}
    \end{equation}
    where $\left|\FR{0}(\ell)\right|=1$.
\end{theorem}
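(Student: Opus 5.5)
We will prove the recurrence by decomposing a weakly increasing $\ell$-interval Fubini ranking according to the size of its \emph{first} block of tied competitors. By \Cref{prop:equivalent characterizations of Fubini}(c) and the argument in \Cref{lem:weakly increasing Fubinis park in identity order}, a weakly increasing Fubini ranking $\alpha=(a_1,\ldots,a_n)$ is determined by the set of indices $i\geq 2$ at which $a_i=i$ (a new rank begins) rather than $a_i=a_{i-1}$ (a tie continues). Equivalently, $\alpha$ corresponds to a composition of $n$ whose parts are the consecutive block sizes. Under this correspondence, the $\ell$-interval condition says exactly that every part has size at most $\ell+1$. So we will first record the bijection between $\FR{n}^\uparrow(\ell)$ and compositions of $n$ with all parts in $[\ell+1]$, and then prove the recurrence for these compositions.

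The key step is the map sending $\alpha\in\FR{n}^\uparrow(\ell)$ with first block of size $x+1$ (so $0\le x\le \ell$, and $a_1=\cdots=a_{x+1}=1$ and, if $x+1<n$, $a_{x+2}=x+2$) to the tuple
\[(a_{x+2}-(x+1),\,a_{x+3}-(x+1),\,\ldots,\,a_n-(x+1)).\]
This is the same shifting idea as the map $\varphi$ in \Cref{lem:save us}. We will check three things:
\begin{itemize}
\item the image is a weakly increasing Fubini ranking of length $n-1-x$, since condition (c) of \Cref{prop:equivalent characterizations of Fubini} is preserved by shifting indices and values by $x+1$;
\item ties are unchanged, so the image is still $\ell$-interval;
\item the inverse (prepend $x+1$ ones and add $x+1$ to every entry) also lands in $\FR{n}^\uparrow(\ell)$.
\end{itemize}
This gives a bijection between the rankings with first block of size $x+1$ and $\FR{n-1-x}^\uparrow(\ell)$. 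Summing over $x=0,\ldots,\ell$ then yields \eqref{eq: count for weakly increasing Fubini}.

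The only delicate point is the boundary case $x+1=n$, where the first block is everything. This is allowed because $\ell+1\le n$ means $x$ can reach $n-1$ only when $\ell=n-1$. In that case the remaining tuple is empty, which is exactly why the convention $|\FR{0}(\ell)|=1$ is needed. Likewise, the hypothesis $\ell+1\le n$ guarantees that every index $n-1-x$ is nonnegative, so no term of the sum refers to a negative length.

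We expect the main work to be verifying carefully that the shift preserves condition (c), in particular at the junction index. The first entry of the shifted tuple is $a_{x+2}-(x+1)=1$, which holds precisely because $a_{x+2}=x+2$ starts a new rank.
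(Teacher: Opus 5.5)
Your argument is correct. The paper itself gives no proof of this statement; it quotes it from Aguilar-Fraga et al.\ \cite{aguilarfraga2024interval}, so there is no in-paper argument to compare against. Your first-block decomposition is a self-contained proof that uses only \Cref{prop:equivalent characterizations of Fubini}(c) and the shift idea from \Cref{lem:save us}, which is exactly what the paper's own toolkit supports.

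The checks you list all go through:
\begin{itemize}
\item For $j\ge 2$, condition (c) for $\alpha$ at index $j+x+1$ translates to $b_j\in\{b_{j-1},j\}$ for the shifted tuple $b$.
\item At the junction, $a_{x+2}\neq a_{x+1}$ forces $a_{x+2}=x+2$, so $b_1=1$.
\item Induction then gives $1\le b_j\le j$, so $b\in[n-1-x]^{n-1-x}$, as the characterization lemma requires.
\end{itemize}
Your boundary analysis, with $x=n-1$ occurring only when $\ell=n-1$, is also right.

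Two points should be made explicit rather than left implicit.

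First, the recurrence uses $\FR{m}^\uparrow(\ell)$ with $m\le \ell$; for instance, every term has this form when $n=\ell+1$. You should state that for $m\le \ell+1$ the tie bound is vacuous, as the paper remarks at the start of \Cref{sec:ell FRs}. Then for $1\le m\le \ell+1$ you have $\FR{m}^\uparrow(\ell)=\FR{m}^\uparrow$, which has $2^{m-1}$ elements by \Cref{lem:weakly increasing Fubinis park in identity order}. Also say that the empty tuple is the unique element of $\FR{0}^\uparrow(\ell)$; the statement's $|\FR{0}(\ell)|=1$ should carry the $\uparrow$. These conventions are exactly what make the recurrence consistent with the seeds $P_j(\ell)=2^{j-1}$ in \Cref{def:ell-Pingala}.

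Second, your opening bijection with compositions is never used in the key step. If you keep it, note that $a_i=i$ and $a_i=a_{i-1}$ are mutually exclusive for a weakly increasing Fubini ranking, since $a_{i-1}\le i-1$. That is what makes distinct sets of new-rank indices give distinct rankings. With that in place, the theorem is just the standard ``remove the first part'' recurrence for compositions of $n$ with all parts in $[\ell+1]$, and your shift map is that recurrence written on rankings.
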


The recurrences defined in \Cref{thm:recurrences for weakly increasing ell frs} are generalizations of the Fibonacci numbers and we define those next.

\begin{definition}\label{def:ell-Pingala}
    Let $\ell\in \NN$. We define the \defterm{$\ell$-Pingala numbers}, $\{P_{n}(\ell)\}_{n\geq 1}$, as the sequence generated by the $\ell+1$ term recurrence
    \[P_n(\ell)=P_{n-1}(\ell)+P_{n-2}(\ell)+\cdots +P_{n-1-\ell}(\ell),\qquad \mbox{for $n\geq \ell+2$}\]
and the initial values $P_j(\ell)=2^{j-1}$ for $j\in[\ell+1]$.
\end{definition}

\Cref{thm:recurrences for weakly increasing ell frs} and \Cref{def:ell-Pingala} imply that for all $n,\ell\in\NN$ with $\ell+1\leq n$,  
\begin{align}
|\FR{n}^\uparrow(\ell)|=P_n(\ell).\label{eq:pingala count}
\end{align}

\begin{remark}  \label{remark:Pingala} 
Pingala was an Indian mathematician and poet who used combinatorial methods to describe meter in Sanskrit poetry. Pingala described a form of the Fibonacci numbers and his Mount Meru Triangle is equivalent to Pascal's Triangle. His work continues to inform research into the connection between music and mathematics, such as in the work of Hall and Kre\v{s}imir \cite{MR1836944}. Kumar \cite{Kumar2021FibonacciSG} discusses a variety of generalizations of the Fibonacci numbers. Based on his research, Kumar advocates for calling the Fibonacci numbers the Pingala-Fibonacci numbers and Pascal's Triangle the Pingala-Pascal Triangle. The $k$-generalized Fibonacci numbers defined by the same recurrence in \Cref{def:ell-Pingala} were studied by Miles, Jr. (\cite{kfibonacci}), but our $\ell$-Pingala numbers are the specialization obtained by choosing increasing powers of two for the seed values. The $1$-Pingala numbers are precisely the Fibonacci numbers with initial values 1 and 2.
\end{remark} 

Next we show that if $\pi\in\Sym_n$ and $\tau$ is a run of $\pi$, then restricting the bijection $\varphi$ (in \Cref{lem:save us}) to the subset 
\[\FR{\tau}(\ell)=\left\{\alpha|_{\tau}:\alpha\in\out_{\FR{n}(\ell)}^{-1}
(\pi)\right\}\subseteq\FR{\tau}\]
is a bijection to the set of weakly increasing $\ell$-interval Fubini rankings with $\len(\tau)$ competitors, i.e., the set $\FR{\len(\tau)}^\uparrow(\ell)$.
\begin{lemma}\label{restricting to ell-interval is still cool}
Fix $\pi\in\Sym_n$ and let $\tau$ be a run of $\pi$. Then the sets $\FR{\tau}(\ell)$ and $\FR{\len(\tau)}^{\uparrow}(\ell)$ are in bijection.
\end{lemma}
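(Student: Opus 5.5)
The plan is to restrict the map $\varphi$ from \Cref{lem:save us} and check three things: well-definedness, injectivity, and surjectivity. Recall $\varphi(\beta)=(b_1-(b_1-1),\ldots,b_k-(b_1-1))$ for $\beta=(b_1,\ldots,b_k)\in\FR{\tau}$, where $k=\len(\tau)$. Since $\FR{\tau}(\ell)\subseteq\FR{\tau}$ and $\varphi$ is already a bijection onto $\FR{k}^\uparrow$, injectivity of the restriction is automatic. So it suffices to show $\varphi(\FR{\tau}(\ell))=\FR{k}^\uparrow(\ell)$.

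\textbf{Well-definedness.} Let $\beta=\alpha|_\tau$ with $\alpha\in\out^{-1}_{\FR{n}(\ell)}(\pi)$. Then $\varphi(\beta)$ is a shift of $\beta$ by a constant, so it has exactly the same multiplicities of equal values as $\beta$. Every value of $\beta$ is a rank of $\alpha$, so it occurs at most $\ell+1$ times in $\alpha$, hence at most $\ell+1$ times in $\beta$. By \Cref{lem:save us}, $\varphi(\beta)\in\FR{k}^\uparrow$, so $\varphi(\beta)\in\FR{k}^\uparrow(\ell)$.

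\textbf{Surjectivity.} This is the main step. Take $\gamma\in\FR{k}^\uparrow(\ell)$. By \Cref{lem:save us} there is a unique $\beta\in\FR{\tau}$ with $\varphi(\beta)=\gamma$, coming from some $\alpha\in\out^{-1}_{\FR{n}}(\pi)$. That $\alpha$ need not be $\ell$-interval, since other runs may contain large ties. I would fix this by changing $\alpha$ outside $\tau$.

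Work with $\psi(\alpha)$. By \Cref{lem:descent force blocks} and the proof of \Cref{corr: count of flat(pi) is 2^n-k}, the blocks of $\psi(\alpha)$ are exactly a refinement of the runs of $\pi$. Moreover, the blocks inside $\tau$ are exactly the tie classes of $\beta$, which are also those of $\gamma$. Now build a new ordered set partition $\mathcal{B}'$: keep the blocks refining $\tau$ unchanged, and split every other run into singletons. Then $\Flatten(\mathcal{B}')=\pi$, so by \Cref{thm: OSP preimage = flatten preimage} there is $\alpha'=\psi^{-1}(\mathcal{B}')\in\out^{-1}_{\FR{n}}(\pi)$. Its largest block has size at most $\max(1,\ell+1)$, because the blocks inside $\tau$ have the sizes of the tie classes of $\gamma$. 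Hence $\alpha'\in\FR{n}(\ell)$.

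It remains to check that $\alpha'|_\tau=\beta$. By the injectivity argument of \Cref{lem:save us}, the rank of $\tau_1$ is the position of $\tau_1$ in $\pi$, independent of $\alpha$. By \Cref{lemma: ranks determined by number of tied competitors}, the ranks within $\tau$ are determined by that starting rank and the block sizes inside $\tau$. Both are unchanged, so $\alpha'|_\tau=\beta\in\FR{\tau}(\ell)$ and $\varphi(\beta)=\gamma$.

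The obstacle I expect is making this last step rigorous: the blocks of $\psi(\alpha)$ restricted to $\tau$ must be determined solely by $\beta$, and ranks inside $\tau$ must depend only on local data. I will verify this directly from \Cref{lemma: ranks determined by number of tied competitors} and \Cref{thm:ordered set partition is the outcome}.
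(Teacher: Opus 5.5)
Your proposal is correct, and it is more complete than the paper's own argument. The paper uses the same map (the restriction of $\varphi$) and the same well-definedness observation you make: a constant shift preserves multiplicities, so no value occurs more than $\ell+1$ times. It then asserts that, by \Cref{lem:save us}, this suffices. But that only gives an injection $\FR{\tau}(\ell)\hookrightarrow\FR{k}^\uparrow(\ell)$, and the paper never shows that every $\gamma\in\FR{k}^\uparrow(\ell)$ is hit.

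You correctly noticed that the $\varphi$-preimage $\beta\in\FR{\tau}$ of such a $\gamma$ comes from some $\alpha$ that may have ties larger than $\ell+1$ in other runs. Your repair supplies the missing surjectivity: keep the blocks of $\psi(\alpha)$ inside $\tau$, split every other run into singletons, and pull back via \Cref{thm: OSP preimage = flatten preimage}.

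The step you flag as the main obstacle is quick. By \Cref{lemma: competitor outcome for ties} and \Cref{lemma: ranks determined by number of tied competitors}, each competitor's rank is the position in $\pi$ of the first element of its block. So the ranks of competitors in $\tau$ depend only on the blocks lying inside $\tau$, which you left unchanged. You could even skip $\alpha$ and \Cref{lem:save us} in this step by transplanting the tie classes of $\gamma$ directly onto $\tau$.

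The paper's version buys brevity at the cost of the surjectivity step. Yours, extended to all runs at once, also justifies the tacit claim in \Cref{thm:big theorem} that the per-run choices can be combined freely.
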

\begin{proof}
    Let $k = \len(\tau)$. 
    Consider the map \[\phi=\varphi|_{\FR{\tau}(\ell)}:\FR{\tau}(\ell)\to \FR{k}^\uparrow(\ell)\]
    defined on $\beta=(b_1,b_2,\ldots,b_k)\in \FR{\tau}(\ell)$
    by
    \[\phi(\beta) = \varphi|_{\FR{\tau}(\ell)}((b_1, b_2, \ldots, b_k)) = (b_1 - (b_1-1), b_2 - (b_1-1), \ldots, b_k - (b_1-1)).\]
    We now show that $\phi$
    is a bijection.

    By \Cref{lem:save us}, to prove our claim, it suffices to show that the image of any 
    \[\beta=\alpha|_\tau=(b_1,b_2,\ldots,b_k)\in\FR{\tau}(\ell)\subseteq\FR{\tau}\] is a weakly increasing $\ell$-interval Fubini ranking with $k$ competitors. 
    Note that the weakly increasing part follows directly from the definition of $\varphi$, so it suffices to show that $\phi(\beta)\in\FR{k}(\ell)$.
By definition of the map $\phi$, equal entries in $\beta$ remain equal entries in the output $\phi(\beta)$. 
Since $\beta\in\FR{\tau}(\ell)$, 
    it is a tuple with at most $\ell+1$
entries that are equal. Therefore $\phi(\beta)$ also has at most $\ell+1$ entries that are equal, which completes the proof.
\end{proof}

By \Cref{corr: count of flat(pi) is 2^n-k}, we can enumerate Fubini rankings with a fixed parking outcome by counting the number of corresponding ordered set partitions whose 
parts have sizes given by the lengths of the runs of the parking outcome. 
However, with $\ell$-interval Fubini rankings, 
we must consider that the largest size of a block in a corresponding ordered set partition is no more than $\ell+1$. 
Whenever a run of a parking outcome is smaller than or equal to $\ell+1$, we can create a ordered set partition in that block without restricting the size of the blocks. 
When a run of a parking outcome is larger than $\ell+1$, the count is given by an $\ell$-Pingala number.
We prove this next.

\begin{theorem}\label{thm:big theorem}
        Fix $\ell\geq 1$. Let $\pi\in \mathfrak{S}_n$ have runs $\tau_1,\tau_2,\ldots,\tau_k$.
        Define $S_{\pi}$ to be the multiset containing the lengths of runs of $\pi$ smaller than or equal to $\ell+1$, and define $B_{\pi}$ to be the multiset containing the lengths of runs strictly larger than $\ell+1$: 
        \begin{align}
            S_\pi&=\{\len(\tau_i):\text{$\tau_i$ is a run in $\pi$ and }\len(\tau_i)\leq \ell+1\}\text{, and}\notag \\
            B_\pi&=\{\len(\tau_i):\text{$\tau_i$ is a run in $\pi$ and }\len(\tau_i)>\ell+1\}\notag.
        \end{align}
    Then 
        \begin{align}
        \left|\out_{\FR{n}(\ell)}^{-1}(\pi)\right|&=\left[\prod_{s\in S_\pi}2^{s-1}\right]\cdot
        \left[\prod_{b\in B_\pi}P_{b}(\ell)\right].
    \end{align}
\end{theorem}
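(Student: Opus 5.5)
The plan is to reduce the count to a product over runs, exactly as in \Cref{cor: fubini count is 2^n-k} and \Cref{thm:number of UFR with outcome pi}, and then evaluate each factor using \Cref{restricting to ell-interval is still cool} together with \eqref{eq:pingala count}.

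\textbf{Step 1: factor over runs.} Use the bijection $\psi$ and \Cref{thm: OSP preimage = flatten preimage}. Elements of $\out^{-1}_{\FR{n}(\ell)}(\pi)$ correspond to ordered set partitions $\mathcal{B}\in\Flatten^{-1}(\pi)$ all of whose blocks have size at most $\ell+1$. By \Cref{lem:descent force blocks} and the argument in \Cref{corr: count of flat(pi) is 2^n-k}, every such $\mathcal{B}$ is obtained by refining each run $\tau_i$ independently into consecutive blocks. The block-size condition is local to each block, and each block lies inside a single run by \Cref{lemma: ties do not break over runs}. Hence the choices for different runs are independent, and the map $\alpha\mapsto(\alpha|_{\tau_1},\ldots,\alpha|_{\tau_k})$ gives a bijection
\[
\out^{-1}_{\FR{n}(\ell)}(\pi)\;\longrightarrow\;\FR{\tau_1}(\ell)\times\cdots\times\FR{\tau_k}(\ell).
\]
The one point to verify is that combining admissible refinements of the separate runs gives an element of $\FR{n}(\ell)$. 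This holds because $\psi^{-1}$ of any ordered set partition is a Fubini ranking, and its maximum tie size is the largest block size, which is at most $\ell+1$.

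\textbf{Step 2: evaluate each factor.} By \Cref{restricting to ell-interval is still cool}, $|\FR{\tau_i}(\ell)|=|\FR{\len(\tau_i)}^{\uparrow}(\ell)|$. Equivalently, this is the number of compositions of $\len(\tau_i)$ with all parts at most $\ell+1$. If $\len(\tau_i)=s\le \ell+1$, the size constraint never binds. Then $\FR{s}^\uparrow(\ell)=\FR{s}^\uparrow$, and \Cref{lem:weakly increasing Fubinis park in identity order} gives $2^{s-1}$; this also agrees with the seed values of \Cref{def:ell-Pingala}. If $\len(\tau_i)=b>\ell+1$, then \eqref{eq:pingala count} gives $P_b(\ell)$. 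Multiplying over the runs, and splitting them according to whether the length lies in $S_\pi$ or in $B_\pi$, yields the stated formula.

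\textbf{Main obstacle.} The delicate point is the boundary of \eqref{eq:pingala count}, which the paper states only for $\ell+1\le n$ and which depends on the convention $|\FR{0}(\ell)|=1$ in \Cref{thm:recurrences for weakly increasing ell frs}. I would check it directly using the composition interpretation. The compositions of $m$ with parts at most $\ell+1$ satisfy the recurrence of \Cref{def:ell-Pingala}, obtained by conditioning on the size of the last part. For $m\le \ell+1$ their number is $2^{m-1}$. This confirms that the case split into $S_\pi$ and $B_\pi$ is consistent, and that the case $b=\ell+1$ causes no ambiguity.
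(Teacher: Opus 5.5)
Your proposal is correct and follows essentially the same route as the paper. It factors $|\out^{-1}_{\FR{n}(\ell)}(\pi)|$ as $\prod_{i}|\FR{\tau_i}(\ell)|$, converts each factor to $|\FR{\len(\tau_i)}^{\uparrow}(\ell)|$ via \Cref{restricting to ell-interval is still cool}, evaluates that as $P_{\len(\tau_i)}(\ell)$ by \eqref{eq:pingala count}, and uses the seed values $P_s(\ell)=2^{s-1}$ for $s\le \ell+1$. Your refinement argument for the independence of the runs is more careful than the paper, which simply asserts the product because the runs partition $[n]$; that argument implicitly uses that a competitor's rank equals the position in $\pi$ of the first element of its block, so each restriction depends only on how its own run is split. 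Your composition check of the boundary cases is likewise more careful than the paper.
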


\begin{proof}
Fix $\pi\in \Sym_n$ with runs $\tau_1,\tau_2,\ldots,\tau_k$ having corresponding lengths $\len(\tau_1), \len(\tau_2),\ldots,\len(\tau_k)$.
For $i\in[k]$, we denote the $j$-th entry in the run $\tau_i$ by 
$\tau_{(i,j)}$, where $j\in [\len(\tau_i)]$.

Given any element 
$\alpha\in\out_{\FR{n}(\ell)}^{-1}(\pi)$, there exist elements 
\begin{align*}
    \beta_1&=(b_{(1,1)},b_{(1,2)},\ldots,b_{(1,\len(\tau_1))})=\alpha|_{\tau_1}\in\FR{\tau_1}(\ell),\\
    \beta_2&=(b_{(2,1)},b_{(2,2)},\ldots,b_{(2,\len(\tau_2))})=\alpha|_{\tau_2}\in\FR{\tau_2}(\ell),\\
    &\hspace{2mm}\vdots\\
    \beta_k&=(b_{(k,1)},b_{(k,2)},\ldots,b_{(k,\len(\tau_k))})=\alpha|_{\tau_k}\in\FR{\tau_k}(\ell),
\end{align*} 
such that $\alpha$ has entry $b_{(i,j)}$ in index $\tau_{(i,j)}$, for each $i\in[k]$ and for each $j\in[\len(\tau_i)]$.

Since the entries of $\tau_1,\tau_2,\ldots,\tau_k$ partition the indexing set $[n]$, counting the number of $\alpha$'s is equivalent to the product of the number of each possible $\beta_i$, with $i\in[k]$.
Therefore
\begin{equation}
    \left|\out_{\FR{n}(\ell)}^{-1}(\pi)\right| = \prod_{i=1}^k |\FR{\tau_i}(\ell)|.\label{AAAA}
\end{equation}
By \Cref{restricting to ell-interval is still cool}, for each $i\in[k]$, 
\begin{equation}
    |\FR{\tau_i}(\ell)|=| \FR{\len(\tau_i)}^\uparrow(\ell)|.\label{almost there}
\end{equation}
Substituting \Cref{almost there} into \Cref{AAAA} yields
\begin{equation}
    \left|\out_{\FR{n}(\ell)}^{-1}(\pi)\right|=\prod_{i=1}^k | \FR{\len(\tau_i)}^\uparrow(\ell)|.\label{almost almost done}
\end{equation}
Then,
by \Cref{eq:pingala count}, we have
$|\FR{\len(\tau)}^\uparrow(\ell)| = P_{\len(\tau)}(\ell)$, and substituting into \Cref{almost almost done} yields 
\begin{align}
\left|\out_{\FR{n}(\ell)}^{-1}(\pi)\right|=\prod_{i=1}^k P_{\len(\tau)}(\ell).\label{truly we are almost there}
\end{align}

Now define $S_{\pi}$ and $B_\pi$ as in the statement of the theorem. 
By \Cref{def:ell-Pingala}, when $s \in S_\pi$, we have $P_{s}(\ell)=2^{s-1}$. Using this fact, we replace in \Cref{truly we are almost there} the Pingala number with the appropriate power of two for any $s\in S_{\pi}$. Hence 
\[|\out^{-1}_{\FR{n}(\ell)}(\pi)|=\prod_{i=1}^kP_{\len(\tau_i)}(\ell)=\left[\prod_{s\in S_\pi}2^{s-1}\right]\cdot
        \left[\prod_{b\in B_\pi}P_{b}(\ell)\right],\]
which completes the proof.\qedhere
\end{proof}

Next we illustrate the use of \Cref{thm:big theorem} via an example.

\begin{example}
    Consider $\pi = 1342$ and $\ell = 2$. In this case, $\tau_1 = 1  3  4$, $\tau_2 = 2$, so $B_\pi = \{3\}$, $S_\pi = \{1\}$. Thus,
    \[\prod_{i=1}^2P_{\len(\tau_i)}(2)=\left[\prod_{s \in S_\pi} 2^{s-1} \right]\cdot\left[\prod_{b \in B_\pi} P_{b}(\ell)\right]= 2^{1-1} \cdot   P_3(2) = 4\]
    and, in fact, there are only four $2$-interval Fubini rankings of length $4$ with  parking outcome $\pi=1342$. They are:
    \[\out_{\FR{4}(2)}^{-1}(\pi)=\{(1,4,1,1),(1,4,1,3), (1,4,2,2), (1,4,2,3)\}.\]
\end{example}
Next we give a formula for the number of $\ell$-interval Fubini rankings.

\begin{corollary}\label{cor:formula for ell fubini using compositions}
     If $n\in \NN$, then 
     \begin{align}
     |\FR{n}(\ell)|
     &= \sum_{k=1}^{n}\left(\sum_{\textbf{c}=(c_1,c_2,\ldots,c_k)\in\Comp(n,k)}\left(A(n,\textbf{c})\cdot\prod_{\substack{c_i\in \textbf{c} \\ c_i\leq \ell+1}}2^{c_i-1}\cdot\prod_{\substack{c_i\in \textbf{c}\\c_i>\ell+1}}P_{c_i}(\ell)\right)\right),
     \end{align}
     where $A(n,\textbf{c})$ is the number of permutations $\pi\in\Sym_n$ with runs of lengths given by $\textbf{c}$, and $P_i(\ell)$ denotes the $i$-th $\ell$-Pingala number, and if a product is empty, then we set the product equal to 1.
\end{corollary}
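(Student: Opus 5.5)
The plan mirrors the proof of \Cref{cor:formula for uni fubini using compositions}, with \Cref{thm:big theorem} replacing \Cref{thm:number of UFR with outcome pi}. Every $\alpha\in\FR{n}(\ell)$ is a Fubini ranking, hence a parking function with a unique parking outcome in $\Sym_n$. This yields the disjoint decomposition
\[
|\FR{n}(\ell)|=\sum_{\pi\in\Sym_n}\left|\out^{-1}_{\FR{n}(\ell)}(\pi)\right|.
\]

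Next I group the permutations by the composition of $n$ formed by their run lengths. Each $\pi\in\Sym_n$ factors uniquely into its runs $\tau_1\tau_2\cdots\tau_k$, for a unique $k\in[n]$. The tuple $\textbf{c}=(\len(\tau_1),\ldots,\len(\tau_k))$ consists of positive integers summing to $n$, so it lies in $\Comp(n,k)$. Hence $\Sym_n$ is the disjoint union over $k\in[n]$ and $\textbf{c}\in\Comp(n,k)$ of the sets of permutations whose run-length composition is $\textbf{c}$, and these sets have sizes $A(n,\textbf{c})$. Some of these sets may be empty, for example when $\textbf{c}$ is incompatible with maximality of runs; this causes no problem, since then $A(n,\textbf{c})=0$.

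For a fixed $\pi$ with composition $\textbf{c}$, I apply \Cref{thm:big theorem}. The multiset $S_\pi$ is exactly the multiset of parts $c_i\le \ell+1$, and $B_\pi$ is the multiset of parts $c_i>\ell+1$. Therefore $|\out^{-1}_{\FR{n}(\ell)}(\pi)|$ equals $\prod_{c_i\le\ell+1}2^{c_i-1}\prod_{c_i>\ell+1}P_{c_i}(\ell)$. This value depends only on $\textbf{c}$, with empty products equal to $1$ as in the statement. Summing over the $A(n,\textbf{c})$ permutations in each class, and then over $\textbf{c}$ and $k$, gives the formula.

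The only point needing care is the hypotheses. \Cref{thm:big theorem} is stated for $\ell\ge1$, and \eqref{eq:pingala count} is stated for $n\ge\ell+1$; in the theorem's use, runs of length at most $\ell+1$ are instead handled by $2^{s-1}$. The case $\ell=0$ I would check directly: $\FR{n}(0)=\Sym_n$, and each permutation $\alpha$ has outcome $\alpha^{-1}$. The right-hand side is then $\sum_{\textbf{c}}A(n,\textbf{c})\prod_{c_i\le 1}1\cdot\prod_{c_i>1}P_{c_i}(0)$, where the recurrence gives $P_m(0)=1$, so the sum is $n!$. Alternatively, I would simply restrict the corollary to $\ell\ge1$ as in \Cref{thm:big theorem}. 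Beyond this, the argument is a routine regrouping of the sum.
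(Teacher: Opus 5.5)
Your proposal is correct and follows the paper's own proof: write $|\FR{n}(\ell)|$ as a sum of fiber sizes over parking outcomes, group the permutations by their run-length composition $\textbf{c}$ (counted by $A(n,\textbf{c})$), and use \Cref{thm:big theorem} to see that each fiber size depends only on $\textbf{c}$. Your extra check of $\ell=0$ (where $P_m(0)=1$ and the sum collapses to $n!$) goes beyond what the paper addresses; your aside about possibly empty classes is unnecessary, since every composition of $n$ is the run-length composition of some permutation, so $A(n,\textbf{c})\geq 1$ always.
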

\begin{proof}
By definition 
\begin{equation}
    |\FR{n}(\ell)|=\sum_{\pi\in\Sym_n}|\out_{\FR{n}(\ell)}^{-1}(\pi)|.\notag
\end{equation}
By \Cref{thm:big theorem}, we know that $|\out_{\FR{n}(\ell)}^{-1}(\pi)|$ is a product of powers of two and $\ell$-Pingala numbers which are indexed by the lengths of the runs making up $\pi$.
Hence, given $\textbf{c}=(c_1,c_2,\ldots,c_k)\in\Comp(n,k)$,  there are $A(n,\textbf{c})$ permutations $\pi$ with runs of lengths given by $\textbf{c}$, and each of those permutations has the same number of $\ell$-interval Fubini rankings parking in that order. 
Thus, we must account for all possible compositions of run lengths and we must also vary the number of runs from 1 to $n$, which yields
\begin{align*}
|\FR{n}(\ell)|&=\sum_{\pi\in\Sym_n}|\out_{\FR{n}(\ell)}^{-1}(\pi)|\\
&=\sum_{k=1}^{n}\left(\sum_{\textbf{c}=(c_1,c_2,\ldots,c_k)\in\Comp(n,k)}\left(A(n,\textbf{c})\cdot\prod_{\substack{c_i\in \textbf{c}\\c_i\leq \ell+1}}2^{c_1-1}\cdot\prod_{\substack{c_i\in \textbf{c}\\c_i> \ell+1}}P_{c_i}(\ell)\right)\right),
\end{align*}
as claimed.
\end{proof}

We conclude with an example.

\begin{example}
We want to count the number of $2$-interval Fubini rankings with $5$ competitors. 
Constructing these rankings is computationally demanding, and we have written Sage code \cite{github} that constructs $\ell$-interval Fubini rankings of length $n$.
Using this code one can verify that $|\FR{5}(2)|=530$.

Next, we verify this count using \Cref{cor:formula for ell fubini using compositions}. 
We trust the reader can verify the following computations:
\begin{itemize}[leftmargin=.2in]
     \item $\Comp(5,1)=\{(5)\}$ and $A(5,(5))=1$.
     
     \item  $\Comp(5,2)=\{(4,1), (3,2), (2,3), (1,4)\}$ and \\ $A(5,(4,1))=4$, $A(5,(3,2))= 9$, $A(5,(2,3))=9$, $A(5,(1,4))=4$.

     \item  $\Comp(5,3)=\{(1,1,3), (1,3,1), (3,1,1),(1,2,2),(2,1,2),(2,2,1)\}$ and  $A(5,(1,1,3))=6$, $A(5,(1,3,1))=11$, $A(5,(3,1,1))=6$, $A(5,(1,2,2))=16$, $A(5,(2,1,2))=11$, $A(5,(2,2,1))=16$.

   \item $\Comp(5,4)=\{(1,1,1,2), (1,1,2,1), (1,2,1,1),(2,1,1,1)\}$, and $A(5,(1,1,1,2))=4$, $A(5,(1,1,2,1))=9$, $A(5,(1,2,1,1))=9$, $A(5,(2,1,1,1))=4$.

   \item $\Comp(5,5)=\{(1,1,1,1,1)\}$ and $A(5,(1,1,1,1,1))=1$.
\end{itemize}
Therefore
\begin{align*}
|\FR{5}(2)|&=\sum_{k=1}^{5}\left(\sum_{\textbf{c}=(c_1,c_2,\ldots,c_k)\in\Comp(5,k)}\left(A(5,\textbf{c})\prod_{\substack{c_i\in \textbf{c}\\c_i\leq 3}}2^{c_i-1}\prod_{\substack{c_i\in \textbf{c}\\c_i> 3}}P_{c_i}(2)\right)\right)\\
&= A(5,(5))  (P_5(2)) +
 A(5,(4,1))  (2^{1-1})  (P_4(2)) 
+ A(5,(3,2))  (2^{3-1}  2^{2-1}) \\
&\qquad+ A(5,(2,3))  (2^{2-1}  2^{3-1}) + A(5,(1,4))  (2^{1-1})  (P_4(2))
+ A(5,(1,1,3))  (2^{1-1}  2^{1-1}  2^{3-1}) \\
&\qquad+ A(5,(1,3,1))  (2^{1-1}  2^{3-1}  2^{1-1}) 
+A(5,(3,1,1))  (2^{3-1}  2^{1-1}  2^{1-1}) \\
&\qquad+ A(5,(1,2,2))  (2^{1-1}  2^{2-1}  2^{2-1})
+ A(5,(2,1,2))  (2^{2-1}  2^{1-1}  2^{2-1}) \\
&\qquad+ A(5,(2,2,1))  (2^{2-1}  2^{2-1}  2^{1-1}) + A(5,(1,1,1,2))  (2^{1-1}  2^{1-1}  2^{1-1}  2^{2-1}) \\
&\qquad + A(5,(1,1,2,1))  (2^{1-1}  2^{1-1}  2^{2-1}  2^{1-1})
+ A(5,(1,2,1,1))  (2^{1-1}  2^{2-1}  2^{1-1}  2^{1-1}) \\
&\qquad+ A(5,(2,1,1,1))  (2^{2-1}  2^{1-1}  2^{1-1}  2^{1-1}) + A(5,(1,1,1,1,1))  (2^{1-1}  2^{1-1}  2^{1-1}  2^{1-1}  2^{1-1}) \\
&= (1 \cdot 1 \cdot 13) 
+ (4 \cdot 1 \cdot 7) 
+ (9 \cdot 4 \cdot 2) 
+ (9 \cdot 4 \cdot 2) 
+ (4 \cdot 1 \cdot 7) 
+ (6 \cdot 4 \cdot 1) \\
&\qquad+ (11 \cdot 4 \cdot 1) 
+ (6 \cdot 4 \cdot 1) 
+ (16 \cdot 4 \cdot 1) 
+ (11 \cdot 4 \cdot 1) 
+ (16 \cdot 4 \cdot 1) 
+ (4 \cdot 2 \cdot 1) \\
&\qquad+ (9 \cdot 2 \cdot 1) 
+ (9 \cdot 2 \cdot 1) 
+ (4 \cdot 2 \cdot 1) 
+ (1 \cdot 1 \cdot 1) \\
&= 13+28+72+72+28+24+44+24+64+44+64+8+18+18+8+1 \\
&= 530,
\end{align*}
as expected. 
\end{example}

\section{Future Work}\label{sec:future}
We conclude with some directions for further study.

\subsection{$(m,n)$-Fubini Rankings}
If $m,n\in \mathbb{N}$ with $n\leq m$, then  $(m,n)$-parking functions are parking functions  where there are $n$ cars and $m$ parking spots. It would be interesting to study the analogous concept of $(m,n)$-Fubini rankings, where there are $m$ possible ranks and only $n$ competitors, and we can use at most $n$ of the $m$ possible ranks. These rankings would also ensure that whenever $k$ competitors tie at rank $i$, the ranks $i+1,i+2,\ldots,i+k-1$ are disallowed.
The parking outcome of a $(m,n)$-Fubini ranking would be a permutation of $[n]$ with $m-n$ bars to indicate the empty parking spots. 
One could study the analogous question.
\begin{question}
    How many $(m,n)$-Fubini rankings are there with a fixed parking outcome?
\end{question}
\subsection{$b$-Fubini Rankings}
For $b\in[n]$, the set of $b$-Fubini rankings with $n$ competitors is the collection of elements in $\FR{n}$ in which the first $b$ competitors place in distinct ranks. 
The parking outcome is  a permutation of $[n]$ and so we ask again: 
\begin{question}
    How many $b$-Fubini rankings are there with a fixed parking outcome?
\end{question}

\subsection{Fubini Rankings with a Fixed Maximum Rank}
Fix $m\in[n]$ and let 
\[\MFR_n(m)=\left\{\alpha=(a_1,a_2,\ldots,a_n)\in \FR{n}:\max_{1\leq i\leq n}(a_i)=m\right\}\] denote the set of Fubini rankings of length $n$ with maximum rank being $m$. The number of these rankings is given next.

\begin{lemma}
    If $1\leq m \leq n$, then 
    \begin{equation}
        |\MFR_n(m)| = \binom{n}{n-m+1}\Fub_{m-1}.
    \end{equation}
\end{lemma}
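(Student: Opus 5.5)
The plan is to use the ordered set partition bijection $\psi$ of \Cref{lem:bijection between Fubini rankings and ordered set partitions} together with the Ranks Lemma (\Cref{lemma: ranks determined by number of tied competitors}). If $\psi(\alpha)=(B_1,B_2,\ldots,B_t)$, then the maximum rank of $\alpha$ is $r_t=1+\sum_{j=1}^{t-1}|B_j| = n-|B_t|+1$. Hence $\max(\alpha)=m$ holds if and only if the last block satisfies $|B_t|=n-m+1$. So $\MFR_n(m)$ is in bijection, via $\psi$, with the ordered set partitions of $[n]$ whose final block has exactly $n-m+1$ elements.

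Next I count those ordered set partitions. First choose the final block $B_t$, a subset of $[n]$ of size $n-m+1$; there are $\binom{n}{n-m+1}$ choices. The remaining $m-1$ elements must then be arranged as an arbitrary ordered set partition $(B_1,\ldots,B_{t-1})$ of the complementary $(m-1)$-set. Relabel that set order-preservingly to $[m-1]$. By $\psi$ again, these arrangements are counted by $|\FR{m-1}|=\Fub_{m-1}$. Conversely, any such choice yields an ordered set partition of $[n]$ with last block of size $n-m+1$. Multiplying gives the formula.

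Some care is needed at the edge cases. When $m=1$ the complement is empty, so the count should be $\binom{n}{n}\Fub_0$, which requires the convention $\Fub_0=1$ (the empty ordered set partition). I would state that convention explicitly, since the paper indexes Fubini numbers from $n\geq1$. One should also check that the relabeling bijection is well-defined and invertible, which is routine.

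The only substantive step is the equivalence $\max(\alpha)=m \iff |B_t|=n-m+1$. This follows from the Ranks Lemma together with $\sum_{j=1}^t|B_j|=n$, so I expect no real obstacle beyond stating the $\Fub_0$ convention.
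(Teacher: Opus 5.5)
Your proof is correct and uses the same decomposition as the paper: the competitors holding the maximum rank $m$ number exactly $n-m+1$, and the remaining $m-1$ competitors form an arbitrary Fubini ranking, giving $\binom{n}{n-m+1}\Fub_{m-1}$. You phrase this through $\psi$ and the Ranks Lemma (last block $B_t$ has size $n-m+1$), whereas the paper reads it off the weakly increasing rearrangement $\alpha^\uparrow$ via the characterization lemma ($a'_m=m$ and all later entries equal $m$). Your version is also more complete than the paper's: it checks the converse direction (every choice of last block together with an ordered set partition of the complement gives an element of $\MFR_n(m)$), and it states the convention $\Fub_0=1$ needed for $m=1$, both of which the paper leaves implicit.
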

\begin{proof}
    Fix $m \in [n]$. Let $\alpha\in\MFR_n(m)$ be arbitrary. 
    
    Let $\alpha^{\uparrow} = (a'_1, a'_2, \ldots, a'_n)$ be the weakly increasing rearrangement of $\alpha$. By \Cref{lemma: weakly increasing Fubini proof}, we have that $a'_m = m$. Since the maximum rank in $\alpha$ is $m$, we then have that the next $n-m$ entries are also $m$, with any previous entries less than $m$. Therefore, there are $n-m+1$ entries in $\alpha$ labeled $m$.
    Let $\beta=(b_1,b_2,\ldots,b_{n-m+1})$ denote $\alpha$ with the entries labeled $m$ removed. Suppose for the sake of contradiction that $\beta$ is not a Fubini ranking. This implies that by \Cref{lemma: weakly increasing Fubini proof}, there is some $i\in[n-m+1]$ such that
    $b_i \notin \{b_{i-1}, i\}$. However $\alpha$ is a Fubini ranking, which implies that the first $n-m+1$ entries in $\alpha^\uparrow$ satisfy $a'_i=a'_{i-1}$ or $a'_i=i$, which is a contradiction. Therefore, if $\alpha\in\MFR_n(m)$, then $\alpha$ is a tuple consisting of $n-m+1$ entries labeled $m$, and the remaining $m-1$ entries form a Fubini ranking of length $m-1$. Therefore the set $\MFR_n(m)$ is enumerated by,
    \begin{equation}
        |\MFR_n(m)|=\binom{n}{n-m+1}\Fub_{m-1}.\qedhere
    \end{equation}
    which completes the proof.
\end{proof}
With the count of $\MFR_n(m)$ now known, we pose the following question:
\begin{question}
    How many Fubini rankings are there with a fixed maximum rank and a fixed parking outcome?
\end{question}

\subsection{$k$-Restricted Fubini Rankings} Fix $k \in [n]$ and let $\RFR_n(k)$ be the set of Fubini rankings of length $n$ which contain \textit{exactly} $k$ ties (of any length larger than 1). That is 
\[\RFR_n(k) = \{\alpha \in \FR{n} : |\{B \in \psi(\alpha) : |B| > 1\}| = k\},\]
where $\psi$ is the bijection between Fubini rankings and ordered set partitions described in \Cref{lem:bijection between Fubini rankings and ordered set partitions}. 

\begin{question}\label{question:restricted fubs}
    How many restricted Fubini rankings of length $n$ with exactly $k$ ties are there? Moreover, how many are there with a fixed parking outcome $\pi$?
\end{question}
The fact that $\bigcup_{k=1}^{n} \RFR_n(k) = \FR{n}$, shows that an answer to \Cref{question:restricted fubs} gives a new formula for the Fubini numbers. 

\bibliographystyle{plain}
\bibliography{bibliography.bib}

\end{document}